\title{Quotient graphs and stochastic matrices}
\author{Frederico Cançado\footnote{frederico.cancado@dcc.ufmg.br.}, Gabriel Coutinho\footnote{gabriel@dcc.ufmg.br.}}
\newtheorem{theorem}{Theorem}
\newtheorem{lemma}[theorem]{Lemma}
\newtheorem{corollary}[theorem]{Corollary}
\newtheorem{proposition}[theorem]{Proposition}
\newtheorem{theorem2}{Theorem}
\begin{document}

\AtEndDocument{%
  \par
  \medskip
  \begin{tabular}{@{}l@{}}%
    \textsc{Gabriel Coutinho}\\
    \textsc{Dept. of Computer Science} \\ 
    \textsc{Universidade Federal de Minas Gerais, Brazil} \\
    \textit{E-mail address}: \texttt{gabriel@dcc.ufmg.br} \\ \ \\
    \textsc{Frederico Cançado} \\
    \textsc{Dept. of Computer Science} \\ 
    \textsc{Universidade Federal de Minas Gerais, Brazil} \\
    \textit{E-mail address}: \texttt{frederico.cancado@dcc.ufmg.br} \\ 
  \end{tabular}}

\maketitle

\begin{abstract}
    Whenever graphs admit equitable partitions, their quotient graphs highlight the structure evidenced by the partition. It is therefore very natural to ask what can be said about two graphs that have the same quotient according to certain equitable partitions. This question has been connected to the theory of fractional isomorphisms and covers of graphs in well-known results that we briefly survey in this paper. We then depart to develop theory of what happens when the two graphs have the same symmetrized quotient, proving a structural result connecting this with the existence of certain doubly stochastic matrices. We apply this theorem to derive a new characterization of when two graphs have the same combinatorial quotient, and we also study graphs with weighted vertices and the related concept of pseudo-equitable partitions. Our results connect to known old and recent results, and are naturally applicable to study quantum walks.
\end{abstract}


\section{Introduction} \label{sec:intro}

Let $G$ and $H$ be graphs on the same number of vertices, with adjacency matrices $A_G$ and $A_H$. The linear programming relaxation of the graph isomorphism problem consists of searching for a matrix $M$ so that
\begin{equation}
	A_G M = M A_H \quad \text{and} \quad \text{$M$ is doubly stochastic.} \label{eq:fracisodef}
\end{equation}
Note that if $M$ is required to have integer entries, then any such $M$ is a permutation matrix, and therefore describes an isomorphism between $G$ and $H$. We say the graphs $G$ and $H$ are \textsl{fractionally isomorphic} if there exists a matrix $M$ satisfying \eqref{eq:fracisodef}.

A well-known theorem \cite[Chapter 6]{scheinerman2013fractional} provides equivalent characterizations of fractionally isomorphic graphs.

\begin{theorem}[Theorem 6.5.1 in \cite{scheinerman2013fractional}] \label{thm1}
    For two graphs $G$ and $H$ on the same number of vertices, the following are equivalent.
    \begin{enumerate}[(i)]
        \item They are fractionally isomorphic, that is, there is a doubly-stochastic matrix $M$ such that $A_G M=M A_H$. \label{thm1-1}
        \item $G$ and $H$ have some common equitable partition. \label{thm1-2} 
        \item $G$ and $H$ have in common the coarsest equitable partition. \label{thm1-3}
        \item $D(G)=D(H)$. \label{thm1-4}
    \end{enumerate}
\end{theorem}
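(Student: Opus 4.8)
The plan is to close the cycle of implications $(\ref{thm1-1})\Rightarrow(\ref{thm1-4})\Rightarrow(\ref{thm1-3})\Rightarrow(\ref{thm1-2})\Rightarrow(\ref{thm1-1})$. Two of these links are essentially formal: $(\ref{thm1-3})\Rightarrow(\ref{thm1-2})$ holds because the coarsest equitable partition is in particular an equitable partition, and $(\ref{thm1-4})\Leftrightarrow(\ref{thm1-3})$ is the classical identification of the iterated degree refinement with the coarsest equitable partition, under which the equality $D(G)=D(H)$ is literally a size-preserving and quotient-preserving bijection between the cells of the two coarsest equitable partitions.

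For $(\ref{thm1-2})\Rightarrow(\ref{thm1-1})$ I would write $M$ down explicitly. Given a common equitable partition with cells of sizes $n_{1},\dots,n_{r}$, let $S_{G},S_{H}$ be the corresponding $0/1$ cell-indicator matrices, $N=\mathrm{diag}(n_{1},\dots,n_{r})$, and $B$ the common quotient matrix, so that $A_{G}S_{G}=S_{G}B$ and $A_{H}S_{H}=S_{H}B$. Take $M=S_{G}N^{-1}S_{H}^{\top}$. Nonnegativity and the doubly stochastic row/column sums are direct, using $S_{G}\mathbf{1}=\mathbf{1}$ and $S_{H}^{\top}\mathbf{1}=(n_{i})_{i}$; and $A_{G}M=MA_{H}$ reduces to the identity $NB=B^{\top}N$, which is simply the statement that the number of edges between cells $i$ and $j$ is counted the same from either side.

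The real obstacle is $(\ref{thm1-1})\Rightarrow(\ref{thm1-4})$: squeezing combinatorial rigidity out of a single doubly stochastic intertwiner. Let $M$ be doubly stochastic with $A_{G}M=MA_{H}$, hence also $M^{\top}A_{G}=A_{H}M^{\top}$ since adjacency matrices are symmetric. I would prove, by induction on the round $t$ of colour refinement, that $M$ is block-diagonal for the two colourings: there is a bijection $\psi_{t}$ between the colour classes of $G$ and of $H$ after $t$ rounds, matched classes have equal size, and $M_{ux}>0$ only when the class of $x$ equals $\psi_{t}$ of the class of $u$ (equivalently, the restriction of $M$ to any matched pair of classes is a genuine doubly stochastic matrix). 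The base case, the trivial colouring, just says $M$ has unit row and column sums. For the inductive step, block-diagonality at time $t$ forces $M\mathbf{1}^{H}_{\psi_{t}(\gamma)}=\mathbf{1}^{G}_{\gamma}$ and $M^{\top}\mathbf{1}^{G}_{\gamma}=\mathbf{1}^{H}_{\psi_{t}(\gamma)}$ for every class $\gamma$; feeding a generic linear combination $v_{G}=\sum_{\gamma}\alpha_{\gamma}\mathbf{1}^{G}_{\gamma}$ and its $H$-counterpart $v_{H}$ through the intertwining relations yields $f_{G}:=A_{G}v_{G}$ and $f_{H}:=A_{H}v_{H}$ with $f_{G}=Mf_{H}$ and $f_{H}=M^{\top}f_{G}$, where $f_{G}(u)$ is a weighted count of the neighbours of $u$ in the various time-$t$ classes and the $\alpha_{\gamma}$ can be chosen so that this value determines the whole neighbour-count tuple — hence the class of $u$ after one more round. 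Restricting to a matched pair of classes, $M$ becomes a doubly stochastic $M_{\gamma}$ with $f_{G}|_{\gamma}=M_{\gamma}(f_{H}|_{\psi_{t}(\gamma)})$ and $f_{H}|_{\psi_{t}(\gamma)}=M_{\gamma}^{\top}(f_{G}|_{\gamma})$, so these two vectors majorise one another and thus carry the same multiset of entries. The crux is then a convexity argument: writing $\mathbf{a}=M_{\gamma}\mathbf{b}$, Jensen gives $a_{u}^{2}\le\sum_{x}(M_{\gamma})_{ux}b_{x}^{2}$, and summing the right-hand side over $u$ returns $\sum_{x}b_{x}^{2}=\sum_{u}a_{u}^{2}$, so equality holds in every term, forcing each row of $M_{\gamma}$ onto a single level set of $\mathbf{b}$; hence $M_{ux}>0$ implies $f_{G}(u)=f_{H}(x)$. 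This simultaneously matches the refined classes in size (equal multiplicities in $\mathbf{a}$ and $\mathbf{b}$) and reinstates block-diagonality at time $t+1$. At stabilisation the colour classes are the coarsest equitable partitions of $G$ and $H$, $\psi_{\infty}$ is a size-preserving bijection between their cells, and since $M_{ux}>0$ forces $f_{G}(u)=f_{H}(x)$ the neighbour-counts into matched cells agree, so the quotient matrices coincide under $\psi_{\infty}$; thus $D(G)=D(H)$. I expect the delicate points to be exactly this Jensen/majorisation observation and phrasing colour refinement carefully enough that the induction goes through cleanly; everything else is routine linear algebra.
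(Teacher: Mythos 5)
Your proposal is correct, but it closes the cycle by a genuinely different route from the one this paper relies on. The paper cites the theorem and, where it needs the mechanism behind (i) $\Rightarrow$ (ii) (inside the proof of Theorem~\ref{thm:mainthm1}), it uses the Ramana--Scheinerman--Ullman argument: form $MM^\T$, which commutes with $A_G$; split it into indecomposable, hence strongly irreducible, doubly stochastic blocks; and invoke Perron--Frobenius to force the corresponding blocks of $A_G$ to have constant row sums, so the blocks are the cells of a common equitable partition. You instead bypass (ii) and (iii) entirely and attack (i) $\Rightarrow$ (iv) head-on, by induction on the rounds of colour refinement, with the key step being the Jensen-equality/mutual-majorisation observation that a doubly stochastic intertwiner cannot mix distinct refined colour classes; amusingly, your majorisation step is precisely the lemma the paper quotes at the end of Section~\ref{sec:notionsquotient} (Theorem 6.2.4 in \cite{scheinerman2013fractional}, that $v=Su$ and $u=Tv$ with $S,T$ doubly stochastic force $v=Pu$ for a permutation $P$). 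What each approach buys: the block-decomposition route is shorter, lands directly on the equitable partition, and is the engine that generalizes to the symmetrized-quotient setting of Theorem~\ref{thm:mainthm1}; your route avoids Perron--Frobenius and indecomposability altogether, is elementary convexity plus bookkeeping, and yields the full refinement history (hence $D(G)=D(H)$) in one pass rather than deducing it afterwards. The only points you should nail down in a written version are the ``generic coefficients'' choice of the $\alpha_\gamma$ (e.g.\ powers of a base exceeding the maximum degree, so the weighted neighbour sum determines the neighbour-count tuple) and the remark that the equality case of Jensen for $t\mapsto t^2$ pins each row of $M_\gamma$ to a single level set of $\mathbf{b}$, which is what re-establishes block-diagonality and equal class sizes after refinement; your verification of (ii) $\Rightarrow$ (i) via $M=S_GN^{-1}S_H^\T$ and the identity $NB=B^\T N$ is the standard one and is fine as stated.
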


The equivalence between \eqref{thm1-1} and \eqref{thm1-2} was proved in \cite{Ramana1994}, and we explain it in details below. We overview the remaining items along with other details in Section~\ref{sec:fraciso}. 

Given a partition $\pi = \{C_1,\dots,C_k\}$ of the vertex set of graph, let $P$ denote its characteristic matrix, that is, the $01$ matrix with rows indexed by vertices and $k$ columns, each equal to $\one_{C_i}$, the indicator vector of the set $C_i$. 

The partition $\pi$ is called \textsl{equitable} if the colspace of $P$ is $A_G$-invariant. Equivalently (see \cite{Compact_graphs_and_equitable_partitions}), $\pi$ is equitable if for any $v \in C_i$, the number of neighbours of $v$ in $C_j$ depends only on $i$ and $j$, thus denoted simply by $p_{ij}$. The directed graph with the $k$ cells of $\pi$ as its vertices and an arc of weight $p_{ij}$ from vertex $C_i$ to $C_j$ is denoted by $G/\pi$, and is referred to as the \textsl{quotient graph} of $G$ by $\pi$. Note that if $\pi$ is equitable, then (see \cite[Chapter 9]{godsil_algebraic_graph_theory}).
\[
	A_G P = P A_{G/\pi} \quad \text{and} \quad A_{G/\pi} = (P^\T P)^{-1} (P^\T A_G P).
\]
A third equivalent definition comes from the notion of \textsl{symmetrized quotient graph}, which is key to our paper. Given the characteristic matrix $P$, let $\wt{P}$ denote the matrix obtained from $P$ upon normalizing its columns, that is,
\[
	\wt{P} = P (P^\T P)^{-1/2}.
\]
We define $\wt{G/\pi}$ to be the weighted (unidirected) graph whose adjacency matrix is given by
\begin{equation}
	A_{\wt{G/\pi}} = \wt{P}^\T A_G \wt{P}, \quad \text{and thus} \quad A_G \wt{P} = \wt{P} A_{\wt{G/\pi}}, \label{eq:sympartition}
\end{equation}
so it follows that $\pi$ is equitable if and only if $A_G$ commutes with $\wt{P}\wt{P}^\T$ (again, see \cite[Lemma 1.1]{Compact_graphs_and_equitable_partitions}).

\begin{figure}[h]
	\begin{tikzpicture}
    \draw (2,1.73) node[draw,circle](q1){};
    \draw (1,0) node[draw,circle,fill=black](q2){};
    \draw (3,0) node[draw,circle,fill=black](q3){};
    \draw[-] (q1) edge node{} (q2);
    \draw[-] (q2) edge node{} (q3);
    \draw[-] (q3) edge node{} (q1);
    
    \draw (0,0.865) node[](r1){{$G=$}};
    \draw (4,0.865) node[](r2){{$G / \pi =$}};
    
    \draw (6,1.73) node[draw,circle](1){};
    \draw (6,0) node[draw,circle,fill=black](2){};
    \draw[->] (1) edge[bend left] node[right]{$2$} (2);
    \draw[->] (2) edge[bend left] node[left]{$1$} (1);
    \draw[-] (2) edge[loop below, below] node{1} (2);

    \draw (8,0.865) node[](r2){{$\wt{G/\pi} =$}};
    
    \draw (10,1.73) node[draw,circle](1){};
    \draw (10,0) node[draw,circle,fill=black](2){};
    \draw[-] (1) edge[right] node{$\sqrt{2}$} (2);
    \draw[-] (2) edge[loop below, below] node{1} (2);
\end{tikzpicture} \caption{Example of a graph $G$ with an equitable partition $\pi$ (white and black vertices), the quotient graph, and the symmetrized quotient graph.} 
\end{figure}
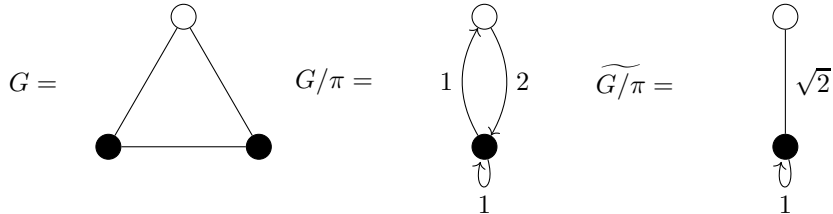

Equitable partitions and graph quotients were likely studied for the first time in \cite{first_reference_to_equitable}. We briefly mention two recent applications of graph quotients to completely different fields: the combinatorial quotient graphs, for instance, have been applied to facilitate the solution of linear programs in \cite{equitable_partition_and_linearoptimization}; and the symmetrized quotients have been used to study perfect state transfer --- a quantum walk phenomenon with applications in quantum computing \cite{PST_and_quantum_computr,kay2018perfect}.

To say that two graphs $G$ and $H$ have a \textsl{common equitable partition}, in the sense of Theorem~\ref{thm1}, is to say that there are equitable partitions of $G$ and $H$ with characteristic matrices $P$ and $Q$ respectively so that
\[
P^\T P = Q^\T Q , \quad \text{and} \quad P^\T A_G P = Q^\T A_H Q.
\]
(Note that we need not worry about using equality rather than some isomorphism sign, as the columns of each partition matrix may be freely reordered). Naturally the first condition implies both graphs have the same number of vertices, and together they imply that the quotient graphs are isomorphic. 

Note that both conditions imply that $A_{G/\pi} = A_{H/\sigma}$, but the converse is not necessarily true. A further observation is that
\[
	A_{G/\pi} = A_{H/\sigma} \implies A_{\wt{G/\pi}} = A_{\wt{H/\sigma}},
\]
as the procedure to symmetrize a matrix by diagonal similarity yields a unique solution. We examine this latter property and we provide an equivalent condition that goes in spirit of the equivalence between \eqref{thm1-1} and \eqref{thm1-2} in Theorem~\ref{thm1}. Our first main result is the following:

\begin{theorem}\label{thm:mainthm1} 
    Let $G$ and $H$ be graphs with adjacency matrices $A_G$ and $A_H$. There are equitable partitions $\pi$ in $G$ and $\sigma$ in $H$ with equal symmetrized quotient graphs, that is $\wt{G/\pi} = \wt{H/\sigma}$, if, and only if, there is a nonnegative matrix $M$ satisfying:
    \begin{enumerate}[(i)]
        \item Both $MM^\T$ and $M^\T M$ are doubly stochastic.
        \item $A_GM=MA_H$.
    \end{enumerate}
\end{theorem}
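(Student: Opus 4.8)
The statement is an "if and only if," so I would prove the two directions separately, and in both I would exploit the matrix $\wt{P}$ of normalized indicator vectors, whose defining feature is that $\wt{P}^\T \wt{P} = I$ (its columns are orthonormal) while $\wt{P}\wt{P}^\T$ is the orthogonal projection onto the column space of $P$.

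\textbf{From equal symmetrized quotients to the matrix $M$.} Suppose $\pi$ in $G$ and $\sigma$ in $H$ are equitable with $\wt{G/\pi} = \wt{H/\sigma}$; call the common symmetrized quotient matrix $B$, so that $\wt{P}^\T A_G \wt{P} = B = \wt{Q}^\T A_H \wt{Q}$, where $P,Q$ are the respective characteristic matrices. The natural candidate is $M = \wt{P}\,\wt{Q}^\T$. I would check condition (ii) by using the two intertwining relations from \eqref{eq:sympartition}: $A_G \wt{P} = \wt{P} B$ and $A_H \wt{Q} = \wt{Q} B$, hence $A_G M = A_G \wt{P}\wt{Q}^\T = \wt{P} B \wt{Q}^\T = \wt{P}\,\wt{Q}^\T A_H = M A_H$ (using $B\wt{Q}^\T = (\wt{Q}B)^\T = (A_H\wt{Q})^\T = \wt{Q}^\T A_H$ by symmetry of $A_H$). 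For condition (i), $MM^\T = \wt{P}\wt{Q}^\T\wt{Q}\wt{P}^\T = \wt{P}\wt{P}^\T$ since $\wt{Q}^\T\wt{Q}=I$, and likewise $M^\T M = \wt{Q}\wt{Q}^\T$; each of these is the projection onto the column space of a characteristic matrix of a partition, and such a projection is doubly stochastic because $\wt{P}\wt{P}^\T = P(P^\T P)^{-1}P^\T$ has constant row/column sums equal to $1$ on each cell block. Finally $M = \wt{P}\wt{Q}^\T \geq 0$ since both factors are entrywise nonnegative.

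\textbf{From the matrix $M$ to equal symmetrized quotients.} This is the harder direction and the main obstacle: given a nonnegative $M$ with $MM^\T$, $M^\T M$ doubly stochastic and $A_G M = M A_H$, I must manufacture equitable partitions. The key structural fact I would aim to establish is that $N := MM^\T$, being a nonnegative symmetric doubly stochastic matrix that commutes with $A_G$ (indeed $A_G N = A_G M M^\T = M A_H M^\T = M M^\T A_G = N A_G$, using that $A_H M^\T = M^\T A_G$ follows by transposing (ii) and symmetry), is the projection onto a sum of eigenspaces — but more: I want $N$ to be (a scalar multiple of, blockwise) the projection onto an $A_G$-invariant subspace spanned by indicator vectors. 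Here I would invoke the characterization already quoted in the excerpt — $\pi$ is equitable iff $A_G$ commutes with $\wt{P}\wt{P}^\T$ — so it suffices to show $N = \wt{P}\wt{P}^\T$ for the characteristic matrix $P$ of some partition $\pi$. To get that, I would argue that a nonnegative symmetric doubly stochastic $N$ with $N^2 = N$ (idempotency I would try to force, or else pass to the support structure / use that $M^\T M$ and $MM^\T$ share nonzero spectrum) must be block-diagonal after simultaneous permutation, with each block a constant matrix $\tfrac1{|C|}J$; this is essentially the statement that the "overlap" equivalence relation generated by $M$ yields genuine cells. One clean route: let $\pi$ be the partition of $V(G)$ into the connected components of the graph whose adjacency structure is the support of $N$ (equivalently, of $MM^\T$); show using double stochasticity and nonnegativity and $A_G N = N A_G$ that $N$ restricted to each component is $\tfrac1{|C|}J_{|C|}$, so $N = \wt{P}\wt{P}^\T$; symmetrically define $\sigma$ from $M^\T M$ on $V(H)$. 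Then $\pi,\sigma$ are equitable, $P^\T P$ and $Q^\T Q$ have the same multiset of cell sizes (since $MM^\T$ and $M^\T M$ have the same nonzero eigenvalues with multiplicity), and finally $M$ induces an isometry $\wt{Q}^\T \wt{P}^\T \cdots$ — more precisely, writing $M = \wt{P} R \wt{Q}^\T$ for some matrix $R$ (possible because the column space of $M$ sits inside that of $P$ and the row space inside that of $Q$), one checks $R R^\T = I = R^\T R$, so $R$ is orthogonal, and $A_G M = M A_H$ collapses to $B_G R = R B_H$ where $B_G = \wt{P}^\T A_G \wt{P}$, $B_H = \wt{Q}^\T A_H \wt{Q}$; absorbing $R$ into a relabeling of the cells of $\sigma$ (equivalently replacing $\wt{Q}$ by $\wt{Q}R^{-1}$, which stays a valid normalized characteristic matrix up to the reordering freedom noted in the excerpt) gives $\wt{G/\pi} = \wt{H/\sigma}$.

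\textbf{Where the difficulty concentrates.} The delicate point is the claim that a nonnegative symmetric doubly stochastic matrix $N$ that commutes with $A_G$ is forced to be the cell-averaging projection $\wt{P}\wt{P}^\T$ of an honest partition — in particular that it is idempotent and "constant on blocks." Establishing idempotency of $N$ from the hypotheses (we are only told $MM^\T$ is doubly stochastic, not idempotent) is the crux; I expect to need the combination of $M\geq 0$, double stochasticity of \emph{both} $MM^\T$ and $M^\T M$, and the intertwining with $A_G$, $A_H$ to pin this down, perhaps by first showing the nonzero eigenvalues of $MM^\T$ are all $1$ (using that double stochasticity forces the spectral radius to be $1$ with the all-ones eigenvector, and then a Perron–Frobenius / trace argument on each irreducible block) and then concluding $MM^\T$ is an orthogonal projection. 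Once that is in hand, the rest is bookkeeping with the $\wt{P}$, $\wt{Q}$ formalism and the already-quoted equivalences.
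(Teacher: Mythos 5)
Your forward direction is exactly the paper's argument: take $M=\wt{P}\wt{Q}^\T$, use the intertwining relations and $\wt{Q}^\T\wt{Q}=I$ to get $MM^\T=\wt{P}\wt{P}^\T$ and $M^\T M=\wt{Q}\wt{Q}^\T$, which are doubly stochastic. No issues there.

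The backward direction, however, rests on a claim that is false, and you have correctly located it as the crux: you want to force $N=MM^\T$ to be idempotent, equal to the cell-averaging projection $\wt{P}\wt{P}^\T$, with all nonzero eigenvalues equal to $1$. A counterexample: let $G=H=K_2$, let
\[
N=\begin{pmatrix}3/4&1/4\\1/4&3/4\end{pmatrix}=\tfrac34 I+\tfrac14 A_G,
\]
and $M=N^{1/2}=\tfrac12 J+\tfrac{1}{\sqrt2}\bigl(I-\tfrac12 J\bigr)$, which is entrywise positive. Then $M$ is a polynomial in $A_G$, so $A_GM=MA_H$; and $MM^\T=M^\T M=N$ is doubly stochastic. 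So $M$ satisfies (i) and (ii), yet $N$ has eigenvalues $1$ and $1/2$, is not idempotent, and is not $\wt{P}\wt{P}^\T$ for any partition. The downstream step also fails: the column space of $M$ (all of $\R^2$ here) need not sit inside the column space of $P$ (the span of $\one$), so the factorization $M=\wt{P}R\wt{Q}^\T$ with $R$ orthogonal is not available. In short, the hypotheses do not pin down $MM^\T$ itself, only enough of its structure to extract a partition.

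The paper's proof circumvents this by never trying to identify $MM^\T$ with a projection. It permutes $MM^\T$ into indecomposable diagonal blocks $S_1,\dots,S_k$; each block is doubly stochastic and (by Lemma~\ref{lem:indecomposable}) irreducible. Writing $A_G$ in the same block pattern, the commutation $AMM^\T=MM^\T A$ gives $S_rA_{rs}=A_{rs}S_s$, hence $S_r(A_{rs}\one)=A_{rs}\one$, and Perron--Frobenius applied to the irreducible $S_r$ forces $A_{rs}\one=c\one$ --- that is, the block partition $\pi$ is equitable. The only spectral information ever used about $MM^\T$ is that its $1$-eigenspace is spanned by the block indicators $\one_r$ (again Perron--Frobenius on each irreducible doubly stochastic block), which suffices to show that $M^\T\one_r$ is a positive multiple of an indicator $\one_{\eta(r)}$ of a cell of the analogous partition $\sigma$ of $H$, that $\eta$ is a bijection, that $\lVert M^\T\one_r\rVert^2=\one_r^\T MM^\T\one_r=\one_r^\T\one_r$, and finally that $(A_{\wt{G/\pi}})_{rs}=(A_{\wt{H/\sigma}})_{\eta(r)\eta(s)}$ by a direct computation with $AMM^\T=MBM^\T$. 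If you replace your idempotency claim with this block-by-block Perron--Frobenius argument, the rest of your outline (matching cell sizes, transporting cells via $M^\T\one_r$) goes through essentially as the paper does it.
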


This Theorem is proved in Section~\ref{sec:symquotient}. We also present here in the introduction, in Section~\ref{sec:quantum}, an application of this theorem to the study of quantum walks in graphs. 

In Section~\ref{sec:notionsquotient} we focus on the problem of what can be said about graphs and equitable partitions so that $A_{G/\pi} = A_{H/\sigma}$. We prove that this condition is equivalent to both graphs admitting the same quotient by their coarsest equitable partition. This is done upon examining how the structure of the set of equitable partitions behaves when successive quotients are taken. While it is plausible that the specialist in the topic of equitable partitions will find the results of this section natural or familiar, we have not been able to locate them in the literature, so we state them in this paper as original contributions. We appreciate of course any indication to a suitable reference. We also verify that techniques based on the work in \cite{equitable_partition_and_linearoptimization} provide another equivalent condition to $A_{G/\pi} = A_{H/\sigma}$, along the lines of condition~\eqref{thm1-1} in Theorem~\ref{thm1}, thus we enrich the known Theorem~\ref{thm:leighton} with two extra equivalent characterizations.

Lastly, and on the topic of weighted graphs, we discuss what has been called pseudo-equitable partitions (concept first introduced in \cite{FIOL1996179} and recently studied in \cite{aida_pseudo_eq,aida_pseudo_eq_2} in connection with eigenvalue bounds, though in a sense more restricted than ours). Given $G$, we say that a partition $\pi$ with matrix $P$ is \textsl{pseudo-equitable} if there is a diagonal matrix $D$ of positive vertex weights $u$ so that the colspace of $P$ is $(AD)$-invariant. We denote the quotient graph by $G/(u,\pi)$. This has been particularly useful to generalize the fact that the trivial partition with only one class is equitable if and only if the graph is regular. By putting $D = \Diag(u)$, where $u$ is the Perron-eigenvector of $G$, it follows that the trivial partition with only one class is pseudo-equitable for all graphs. Going further, note now that any two graphs with the same largest eigenvalue will be pseudo-fractionally-isomorphic. More details come in Section~\ref{sec:pseudo}.

The main results in Section~\ref{sec:pseudo} come from analysing the well known results that transform certain matrices into doubly stochastic matrices by diagonal conjugation (see \cite{brualdi1966diagonal,sinkhorn_theorem}) in conjunction to our analysis of equitable partitions. There we show the following theorem.

\begin{theorem}
    Suppose that $G$ and $H$ are positive-weighted graphs with adjacency matrices $A_G$ and $A_H$ respectively. The following are equivalent:
    \begin{enumerate}[(i)]
        \item There is a nonnegative matrix $M$ such that the connected components of the graph $Z_M$ with adjacency matrix
    $$\veto{&M\\M^\T &}$$
    are complete bipartite graphs, and $A_G M=MA_H$;
    \item there are pseudo-equitable partitions $(u,\pi)$ and $(w,\sigma)$ of $G$ and $H$ respectively such that $A_{\wt{G/(u,\pi)}} = A_{\wt{H/(w,\sigma)}}$.
    \item[] If the conditions hold true, the connected components of $M$ are in bijection with pairs of cells of the equitable partitions of each graph.
    \end{enumerate}
\end{theorem}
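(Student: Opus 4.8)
The plan is to mirror the proof of Theorem~\ref{thm:mainthm1}, with two adjustments: the equitable partitions become pseudo-equitable, and the constant rank-one diagonal blocks that the certificate matrix acquires in that setting are allowed to be arbitrary fully positive blocks. The extra degrees of freedom are exactly the vertex weights. For one implication we build $M$ from the weight-normalized characteristic matrices; for the other we recover the weights from the Perron vectors of the diagonal blocks of $MM^\T$ and $M^\T M$, which is where the Sinkhorn/Brualdi-type normalization results come in.

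For $(ii)\Rightarrow(i)$: let $P,Q$ be the characteristic matrices of $\pi,\sigma$, set $D_u=\Diag(u)$, $D_w=\Diag(w)$, and let $R=D_uP(P^\T D_u^2P)^{-1/2}$ and $S=D_wQ(Q^\T D_w^2Q)^{-1/2}$ be their column-normalized versions, so that $R,S$ have orthonormal columns, their column spaces are $A_G$- resp.\ $A_H$-invariant, and $A_GR=RB$, $A_HS=SB$ with $B=A_{\wt{G/(u,\pi)}}=A_{\wt{H/(w,\sigma)}}$. Put $M=RS^\T\ge 0$. Then $A_GM=RBS^\T=MA_H$ since $B$ is symmetric. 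Because the $i$-th columns of $R$ and $S$ are supported precisely on the $i$-th cells of $\pi$ and $\sigma$, the matrix $M$ is block-diagonal with one fully positive block per matched pair of cells; hence the connected components of $Z_M$ are complete bipartite graphs, in the asserted bijection with pairs of cells.

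For $(i)\Rightarrow(ii)$: the complete bipartite components of $Z_M$ partition $V(G)$ into $C_1,\dots,C_t$ and $V(H)$ into $D_1,\dots,D_t$, with $M=\bigoplus_iM_i$ and each $M_i$ a fully positive $C_i\times D_i$ block (zero rows or columns of $M$, i.e.\ isolated vertices of $Z_M$, I would treat separately). Transposing $A_GM=MA_H$ gives $M^\T A_G=A_HM^\T$, and substituting yields $A_G(MM^\T)=(MM^\T)A_G$; thus $A_G$ commutes with $N:=MM^\T=\bigoplus_iM_iM_i^\T$, and likewise $A_H$ commutes with $M^\T M=\bigoplus_iM_i^\T M_i$. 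Let $f_i,g_i$ be the strictly positive Perron vectors of the irreducible blocks $M_iM_i^\T,M_i^\T M_i$, scaled so that $M_ig_i=\theta_if_i$ and $M_i^\T f_i=\theta_ig_i$ with $\theta_i>0$ the largest singular value of $M_i$; note this forces $\|f_i\|=\|g_i\|$. Define $u$ on $V(G)$ by $u|_{C_i}=f_i$ and $w$ on $V(H)$ by $w|_{D_i}=g_i$; these are positive, as weights must be.

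Here is the key point, and the only place the hypothesis that $G,H$ are \emph{positively} weighted is used: since $A_G\ge 0$, it maps every nonnegative eigenvector of $N$ to a nonnegative eigenvector of $N$ with the same eigenvalue; because each block of $N$ is nonnegative and irreducible, every nonnegative eigenvector of $N$ is a nonnegative combination of the padded vectors $\hat f_i$, so $A_G$ preserves their span --- which is exactly the column space of $D_uP$. Hence $(u,\pi)$ is pseudo-equitable, and symmetrically so is $(w,\sigma)$. To conclude that the symmetrized quotients agree, write $R,S$ again for the column-normalized $D_uP,D_wQ$, whose $i$-th columns are $\hat f_i/\|f_i\|$ and $\hat g_i/\|g_i\|$; using $M\hat g_i=\theta_i\hat f_i$, $M^\T\hat f_i=\theta_i\hat g_i$ and $A_GM=MA_H$ one gets $\langle\hat f_i,A_G\hat f_j\rangle=(\theta_i/\theta_j)\,\langle\hat g_i,A_H\hat g_j\rangle$, and comparing with the transposed identity forces $\langle\hat g_i,A_H\hat g_j\rangle=0$ whenever $\theta_i\ne\theta_j$; in all cases, with $\|f_i\|=\|g_i\|$, this gives $(R^\T A_GR)_{ij}=(S^\T A_HS)_{ij}$, i.e.\ $A_{\wt{G/(u,\pi)}}=A_{\wt{H/(w,\sigma)}}$. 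I expect the converse to be the real work: the delicate step is exactly the one just sketched --- showing that the Perron weights genuinely make the partitions pseudo-equitable, which needs nonnegativity together with irreducibility of the blocks --- along with a careful treatment of the degenerate situations, namely zero rows or columns of $M$ and several blocks sharing a top singular value.
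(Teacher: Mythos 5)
Your proof is correct, but it takes a genuinely different route from the paper. The paper obtains this theorem as an immediate consequence of a Sinkhorn-type scaling result proved just before it: for a nonnegative $M$ whose support is (up to permutation) a direct sum of entrywise-positive blocks, there exist positive diagonal matrices $D,E$ such that $N=DME$ has both $NN^\T$ and $N^\T N$ doubly stochastic; the weights are then read off from $D$ and $E$, and the machinery of Theorem~\ref{thm:mainthm1} is applied to the rescaled situation. The proof of that scaling result is the real work in the paper's route: an iterative application of the symmetric Sinkhorn theorem followed by a delicate subsequence/compactness argument to rule out degeneration of the diagonal factors. You bypass all of that by taking $u|_{C_i}$ and $w|_{D_i}$ to be the top left and right singular vectors of each positive block $M_i$, using Perron--Frobenius on the irreducible blocks of $MM^\T$ and $M^\T M$ to show that the span of the padded Perron vectors is $A_G$-invariant (this is where nonnegativity of $A_G$ enters, exactly as you say), and then the $\theta_i/\theta_j$ versus $\theta_j/\theta_i$ comparison to equate the symmetrized quotient entries. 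Your construction is more elementary and more explicit --- the weights are given in closed form rather than as a limit --- and your case analysis for repeated top singular values is already complete as written, so the ``delicate step'' you flag is in fact done. What your argument does not recover is the scaling statement itself (the existence of $D,E$ making $DME^2M^\T D$ and $EM^\T D^2ME$ doubly stochastic), which the paper proves as a result of independent interest; conversely, the paper's route leaves the translation from the scaled matrix back to pseudo-equitable partitions somewhat implicit (``follows immediately''), whereas yours spells it out. The only loose end in both treatments is the degenerate case of null rows or columns of $M$ (isolated vertices of $Z_M$), which you correctly set aside and which the paper excludes by hypothesis in its scaling theorem.
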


As a consequence of the previous result, we have a fairly useful description of all pairs of pseudo-equitable partitions from two distinct graphs that admit the same symmetrized quotient.

We point out that if $A_G$ and $A_H$ have the same largest eigenvalue, then both conditions of the previous theorem are always satisfied for the partition with only one part.


\subsection{Equivalent characterizations of fractional isomorphism and other notions of similar equitable partitions} \label{sec:fraciso}

Let $\pi,\sigma$ be two partitions of a graph. We say that $\sigma$ is \textsl{coarser} than $\pi$, and denote by $\pi\leq \sigma$, when each cell of $\pi$ is contained in some cell of $\sigma$. In this case, we can also say that $\pi$ is \textsl{finer} than $\sigma$. Given two partitions, $\pi,\sigma$, there is the finest partition coarser than both, which is said to be $\pi$ \textsl{join} $\sigma$ and is denoted by $\pi\vee \sigma$. Likewise, $\pi\wedge \sigma$ denotes the \textsl{meet}, that is, the coarsest partition finer than both. Equitable partitions form a lattice:

\begin{proposition}[see \cite{Ramana1994}]
    Suppose $\pi,\sigma$ are equitable partitions of $X$. Then $\pi\vee \sigma$ and $\pi\wedge \sigma$ are equitable partitions.
\end{proposition}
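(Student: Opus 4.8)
The plan is to show that the join $\pi\vee\sigma$ and the meet $\pi\wedge\sigma$ of two equitable partitions are again equitable, using the characterization of equitability in terms of invariant subspaces. Recall that $\pi$ is equitable exactly when $\col(P)$ is $A_G$-invariant, where $P$ is the characteristic matrix of $\pi$; equivalently, the orthogonal projection onto $\col(P)$ commutes with $A_G$. So I would first translate everything to the language of subspaces: let $U_\pi=\col(P)$ and $U_\sigma=\col(Q)$ be the corresponding subspaces, both $A_G$-invariant (and both containing the all-ones vector, and both spanned by $01$-vectors that partition the coordinates).

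For the \textbf{join}, the key observation is that $U_{\pi\vee\sigma}=U_\pi\cap U_\sigma$. Indeed, a vector lies in $U_\pi$ iff it is constant on every cell of $\pi$, so it lies in $U_\pi\cap U_\sigma$ iff it is constant on every cell of $\pi$ and on every cell of $\sigma$, which is the same as being constant on every cell of the common coarsening $\pi\vee\sigma$ (the cells of $\pi\vee\sigma$ are the connected components of the union of the two partition-relations, and a function constant on $\pi$-cells and $\sigma$-cells is forced to be constant along such components, and conversely). Since the intersection of two $A_G$-invariant subspaces is $A_G$-invariant, $U_{\pi\vee\sigma}$ is $A_G$-invariant, hence $\pi\vee\sigma$ is equitable. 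For the \textbf{meet}, the dual fact is $U_{\pi\wedge\sigma}=U_\pi+U_\sigma$: the span of indicator vectors of the common refinement cells. One way to see $U_\pi+U_\sigma=U_{\pi\wedge\sigma}$ is that $U_\pi+U_\sigma$ is the smallest subspace spanned by coordinate-indicator-type vectors containing both — but to be careful I would instead argue it via orthogonal complements: $(U_\pi+U_\sigma)^\perp=U_\pi^\perp\cap U_\sigma^\perp$, and since $A_G$ is symmetric, $U_\pi$ being $A_G$-invariant implies $U_\pi^\perp$ is $A_G$-invariant, so $U_\pi^\perp\cap U_\sigma^\perp$ is $A_G$-invariant, hence so is its orthogonal complement $U_\pi+U_\sigma$. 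Then it remains to identify $U_\pi+U_\sigma$ with $U_{\pi\wedge\sigma}$, i.e.\ that the sum of the two subspaces of ``functions constant on $\pi$'' and ``functions constant on $\sigma$'' is exactly the functions constant on each $\pi\wedge\sigma$-cell; the inclusion $\subseteq$ is immediate, and for $\supseteq$ one checks that each common-refinement indicator can be written as a combination — here I would use the join identity applied inside each piece, or a direct dimension count: $\dim(U_\pi+U_\sigma)=\dim U_\pi+\dim U_\sigma-\dim(U_\pi\cap U_\sigma)=|\pi|+|\sigma|-|\pi\vee\sigma|$, and separately one shows $|\pi\wedge\sigma|=|\pi|+|\sigma|-|\pi\vee\sigma|$ when...

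Actually this last count is false in general (it holds for the two-partition modular-lattice situation only under extra hypotheses), so the cleaner route for the meet is to avoid dimension counting entirely: show directly that $U_\pi + U_\sigma$ consists precisely of vectors that are constant on the cells of $\pi \wedge \sigma$, by observing $\supseteq$ is the nontrivial direction and establishing it cell-by-cell. Given a cell $C$ of $\pi\wedge\sigma$, its indicator $\one_C$ equals $\one_{C_i}\cdot\one_{C'_j}$ coordinatewise where $C=C_i\cap C'_j$ with $C_i\in\pi$, $C'_j\in\sigma$; this is not obviously a \emph{sum} of something in $U_\pi$ and something in $U_\sigma$. So instead I would prove the meet case by the symmetric argument to the join but applied to a different pair: work with the orthogonal complements directly and characterize $(U_\pi+U_\sigma)^\perp$ as functions summing to zero on every $\pi$-cell and every $\sigma$-cell, then note that this space is spanned by $\{\one_c - \one_{c'} : c, c' \text{ in the same } \pi\wedge\sigma\text{-cell, differing in one refining step}\}$ — i.e.\ it is exactly the span of differences of ``adjacent'' atoms, which is $U_{\pi\wedge\sigma}^\perp$.

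The \textbf{main obstacle}, therefore, is not the invariance step — that is a one-line consequence of symmetry of $A_G$ — but rather the purely combinatorial identification of the subspace operations $\cap$ and $+$ with the lattice operations $\vee$ and $\wedge$ on partitions, and in particular getting the meet case right without a spurious dimension argument. I expect to spend most of the proof carefully verifying $U_\pi \cap U_\sigma = U_{\pi\vee\sigma}$ (easy, via ``constant on cells'') and $U_\pi + U_\sigma = U_{\pi\wedge\sigma}$ (the subtle one, best handled by passing to orthogonal complements and using that $A_G$-invariance passes to $\perp$ because $A_G=A_G^\T$). Once both identifications are in place, equitability of $\pi\vee\sigma$ and $\pi\wedge\sigma$ follows immediately from closure of the set of $A_G$-invariant subspaces under intersection and sum.
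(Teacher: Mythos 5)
Your treatment of the join is correct and is the standard argument: $U_{\pi\vee\sigma}=U_\pi\cap U_\sigma$ because a vector constant on the cells of $\pi$ and on the cells of $\sigma$ is constant along any chain of overlapping cells, and the intersection of $A_G$-invariant subspaces is $A_G$-invariant. The meet half, however, rests on the identification $U_\pi+U_\sigma=U_{\pi\wedge\sigma}$, and this is false. On $X=\{1,2,3,4\}$ take $\pi=\{\{1,2\},\{3,4\}\}$ and $\sigma=\{\{1,3\},\{2,4\}\}$: then $\pi\wedge\sigma$ is the discrete partition, so $U_{\pi\wedge\sigma}=\R^4$, while $\dim(U_\pi+U_\sigma)=2+2-\dim(U_\pi\cap U_\sigma)=3$. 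Your fallback via orthogonal complements is the same identity in disguise: $(U_\pi+U_\sigma)^\perp=U_\pi^\perp\cap U_\sigma^\perp$ contains $(1,-1,-1,1)^\T$, whereas $U_{\pi\wedge\sigma}^\perp=\{0\}$, so it is not the span of differences of atoms lying in a common cell of $\pi\wedge\sigma$. What survives is that $U_\pi+U_\sigma$ is $A_G$-invariant; the trouble is that it is not a partition subspace, and the smallest partition subspace containing it, namely $U_{\pi\wedge\sigma}$, need not be $A_G$-invariant.

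This is not a repairable slip, because the common refinement of two equitable partitions genuinely need not be equitable. The constraints coming from $\pi$ and $\sigma$ only pin down the marginals of the vector of block-degrees into the cells of $\pi\wedge\sigma$, not its entries. Concretely, let $A=\{a_1,a_2,a_3\}$, $B=\{b_1,b_2,b_3\}$, $C=\{c_1,c_2,c_3\}$, $D=\{d_1,d_2,d_3\}$ and take the $3$-regular graph with edges $a_1a_2$, $b_2b_3$, $c_2c_3$, $d_1d_2$, $a_3b_1$, $a_1c_1$, $a_2c_3$, $a_3c_1$, $a_3c_2$, $a_1d_1$, $a_2d_2$, $b_2c_2$, $b_3c_3$, $b_1d_1$, $b_1d_2$, $b_2d_3$, $b_3d_3$, $c_1d_3$. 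One checks that $\pi=\{A\cup B,\,C\cup D\}$ and $\sigma=\{A\cup C,\,B\cup D\}$ are both equitable (every vertex has one neighbour in its own $\pi$-cell and two in the other, and two neighbours in its own $\sigma$-cell and one in the other), yet in the common refinement $\{A,B,C,D\}$ the vertex $a_1$ has one neighbour in $C$ while $a_3$ has two. So the proposition must be read with $\wedge$ denoting the meet in the lattice of \emph{equitable} partitions, which is how the cited source proves it: the discrete partition is equitable, hence the set of equitable partitions finer than both $\pi$ and $\sigma$ is nonempty; the join of all of them is equitable by the (correct) join half of your argument, is finer than both $\pi$ and $\sigma$, and is by construction the coarsest such. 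That is the argument you should give for the meet, in place of the subspace-sum identification.
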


Moreover, all graphs have the finest equitable partition, which is the trivial one (each cell is a singleton), and joining all equitable partitions will give us the coarsest equitable partition, which is unique. The equivalence between \eqref{thm1-1} and \eqref{thm1-3} in Theorem~\ref{thm1} was proved in \cite{tinhofer1986graph}. Symbols $D(G)$ and $D(H)$ in item \eqref{thm1-4} in Theorem~\ref{thm1} refer to encodings of all partial outputs of the well-known color refinement algorithm (equivalent to the 1-dimensional Weisfeiler-Leman \cite{weisfeiler1968reduction}), and its equivalence to \eqref{thm1-3} is quite straightforward (see \cite{leighton1982finite}).

Assume $G$ and $H$ are graphs and $\pi$ and $\sigma$ are their coarsest equitable partitions, respectively, so that $G/\pi \simeq H / \sigma$. In \cite[Section 3]{leighton1982finite}, three equivalent conditions to this were presented, all based on the notion of a graph cover. A (possibly infinite) graph $\Gamma$ \textsl{covers} $G$ via maps $\gamma_V : V(\Gamma) \to V(G)$ and $\gamma_E : E(\Gamma) \to E(G)$ if
\begin{itemize}
	\item $u \in e$ in $\Gamma$ implies $\gamma_V(u) \in \gamma_E(e)$ in $G$,
	\item both maps $\gamma_V$ and $\gamma_E$ are onto,
	\item $\gamma_E$ is locally one-to-one, that is, $\gamma_E$ is a bijection between the sets of edges $\{e \in E(\Gamma) : u \in e\}$ and $\{e \in E(G) : \gamma_V(u) \in e\}$.
\end{itemize}
The graph $\Gamma$ is called a cover of $G$. It is straightforward to notice that a connected graph $G$ has a unique tree that covers it, called the universal cover, and unless $G$ is itself a tree, this universal cover is an infinite graph (for instance: if $G$ is a cycle, the universal cover is the infinite path in which all vertices have degree $2$).

\begin{theorem}[see \cite{leighton1982finite}, Section 3] \label{thm:leighton}
	If $G$ and $H$ are graphs, then the following are equivalent:
	\begin{enumerate}[(i)]
		\item $G$ and $H$ share a common finite cover.
		\item $G$ and $H$ have the same universal cover.
		\item $G$ and $H$ share a common (possibly infinite) cover.
		\item If $\pi$ and $\sigma$ are the coarsest equitable partitions of $G$ and $H$ respectively, then $G /\pi \simeq H / \sigma$.
	\end{enumerate}
\end{theorem}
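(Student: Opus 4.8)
The plan is to close the cycle $(i)\Rightarrow(iii)\Rightarrow(ii)\Rightarrow(iv)\Rightarrow(i)$, separating the three conditions about (possibly infinite) covers, which follow from soft general facts, from the genuinely hard step of manufacturing a \emph{finite} common cover. Throughout I would assume $G$ and $H$ connected, reducing the general case to a componentwise matching. The implication $(i)\Rightarrow(iii)$ is immediate, as a finite cover is a cover, and $(ii)\Rightarrow(iii)$ is equally immediate, since a shared universal cover is in particular a shared cover. For $(iii)\Rightarrow(ii)$ I would lean on two structural facts: covers compose (checking that the composite vertex and edge maps retain the local bijection property), and a connected graph admits a unique tree cover, its universal cover. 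Given a cover $\Gamma$ common to $G$ and $H$, I would restrict to a connected component $\Gamma_0$ --- still a cover of each, since $G$ and $H$ are connected --- and take its universal cover $T$. Composition shows the tree $T$ covers both $G$ and $H$, and uniqueness of tree covers forces $T$ to be simultaneously the universal cover of $G$ and of $H$; hence $(ii)$.

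The core of the soft part is the equivalence $(ii)\Leftrightarrow(iv)$, which I would obtain by identifying the universal cover with the stable output of color refinement. Unfolding the universal cover of $G$ rooted at a vertex $v$, level by level, reproduces exactly the iterated-degree information computed by color refinement; thus two vertices lie in the same cell of the coarsest equitable partition $\pi$ precisely when their rooted universal covers are isomorphic as rooted trees, and the universal cover is exactly the tree in which every vertex of type $i$ has exactly $p_{ij}$ neighbours of type $j$. Consequently each vertex of the unrooted tree $T$ carries an intrinsic type, namely the isomorphism class of $T$ rooted there, with distinct cells giving distinct types and every cell occurring since $G$ is connected and the covering map is onto. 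A bijection between the cells of $\pi$ and of $\sigma$ preserving all the numbers $p_{ij}$ --- that is, an isomorphism $G/\pi\simeq H/\sigma$ --- is then the same thing as a type-preserving identification of the rooted unfoldings, which by induction on unfolding depth is exactly an isomorphism of universal covers. I would spell out both directions by this induction.

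The main obstacle is the remaining implication $(iv)\Rightarrow(i)$ (equivalently $(ii)\Rightarrow(i)$): extracting a genuinely \emph{finite} common cover from the mere coincidence of the infinite universal covers. This is Leighton's graph covering theorem, and it is far deeper than the equivalences above; in particular it cannot be settled by any direct unfolding, precisely because finiteness must be engineered rather than read off. I would invoke the construction of \cite{leighton1982finite}: realize the shared universal tree $T$ together with its two projections, regard $G$ and $H$ as quotients of $T$ by their respective deck structures, and build a finite graph covering both by a controlled amalgamation that respects the common branching data recorded by $G/\pi\simeq H/\sigma$. The delicate point --- and where essentially all the difficulty of the theorem lives --- is guaranteeing that this amalgamation can be taken finite, which is the combinatorial heart of Leighton's argument rather than of the surrounding equivalences.
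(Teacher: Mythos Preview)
The paper does not supply its own proof of this statement: Theorem~\ref{thm:leighton} is quoted as a known result with the attribution ``see \cite{leighton1982finite}, Section 3'' and no argument is given in the text. There is therefore nothing in the paper to compare your proposal against.

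That said, your outline is the standard one and is correct. The soft implications $(i)\Rightarrow(iii)\Rightarrow(ii)\Leftrightarrow(iv)$ are handled properly: composition of covers plus uniqueness of the tree cover gives $(iii)\Rightarrow(ii)$, and the identification of the rooted universal cover with the iterated-degree data of colour refinement yields $(ii)\Leftrightarrow(iv)$ exactly as you describe (this is the content of the equivalence of items \eqref{thm1-3} and \eqref{thm1-4} in Theorem~\ref{thm1}, also only cited here). You are also right that the entire weight of the theorem sits in $(iv)\Rightarrow(i)$, which is precisely Leighton's finite common cover theorem; invoking \cite{leighton1982finite} for that step is appropriate and is in fact all the paper does as well. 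The only cautionary remark is that your reduction to the connected case (``componentwise matching'') is not completely free for conditions (i)--(iii), since one must argue that the components of $G$ and $H$ can be paired so that corresponding components satisfy the hypothesis; this pairing is most cleanly read off from condition (iv), so it is harmless in the end, but it deserves a sentence.
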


Our work in Section~\ref{sec:notionsquotient} provides an equivalent condition to these:

\begin{theorem}
	Two graphs $G$ and $H$ admit coarsest equitable partitions $\pi$ and $\sigma$ so that $G /\pi \simeq H / \sigma$ if and only if there is a nonnegative nonzero matrix $M$ with constant row sum and constant column sum such that $A_{G} M = M A_{H}$.
\end{theorem}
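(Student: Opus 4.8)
The plan is to prove the two implications separately, using Theorem~\ref{thm:leighton} (specifically the equivalence with item~(i), the existence of a common finite cover) on one side, and a direct analysis of the matrix $M$ on the other.

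\emph{From common coarsest quotient to the matrix $M$.} Suppose $G/\pi \simeq H/\sigma$ for the coarsest equitable partitions $\pi,\sigma$, with characteristic matrices $P$ and $Q$. The quotient graphs being isomorphic means $A_{G/\pi}$ and $A_{H/\sigma}$ are the same matrix up to relabelling cells, so after reordering columns of $Q$ we have $A_{G/\pi}=A_{H/\sigma}=:B$. Now $A_G P = P B$ and $A_H Q = Q B$. The natural candidate is $M = P D Q^\T$ for a suitable positive diagonal matrix $D$ indexed by cells: then $A_G M = A_G P D Q^\T = P B D Q^\T$ and $M A_H = P D Q^\T A_H = P D (A_H Q)^\T$ needs $Q^\T A_H = B^\T Q^\T$; since $B$ need not be symmetric this is where care is needed. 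The cleaner route is to symmetrize: by the observation in the introduction, $A_{G/\pi}=A_{H/\sigma}$ forces $A_{\wt{G/\pi}}=A_{\wt{H/\sigma}}=:\wt B$, and then with $\wt P = P(P^\T P)^{-1/2}$, $\wt Q = Q(Q^\T Q)^{-1/2}$ we have $A_G \wt P = \wt P \wt B$ and $A_H \wt Q = \wt Q \wt B$. Set $M = \wt P \wt Q^\T$. Then $A_G M = \wt P \wt B \wt Q^\T = \wt P (\wt Q^\T A_H)^\T{}^\T$... more directly, $A_G M = (A_G\wt P)\wt Q^\T = \wt P\wt B\wt Q^\T = \wt P(\wt Q \wt B^\T)^\T$; using $\wt B = \wt B^\T$ (it is symmetric) and $\wt Q\wt B = A_H\wt Q$, we get $A_G M = \wt P(A_H\wt Q)^\T{} $ — I must instead write $A_G M = \wt P\wt B\wt Q^\T$ and $M A_H = \wt P\wt Q^\T A_H = \wt P(A_H\wt Q)^\T = \wt P(\wt Q\wt B)^\T = \wt P\wt B\wt Q^\T$, using symmetry of $\wt B$; hence $A_G M = M A_H$. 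Finally $M M^\T = \wt P\wt Q^\T\wt Q\wt P^\T = \wt P\wt P^\T$ since $\wt Q^\T\wt Q = I$, and $\wt P\wt P^\T$ is a projection onto the column space of $P$; its row sums are $\one$ because $\one$ lies in that column space ($\pi$ is a partition of all vertices). So the row sums of $MM^\T$ are all $1$, in particular $M$ has a constant row sum after noting $M\one$: indeed $M\one = \wt P\wt Q^\T\one = \wt P\,(\text{column sums of }\wt Q) $, and $\wt Q^\T\one$ has $i$-th entry $|C_i|^{1/2}$, giving $M\one$ constant on each cell of $\pi$ but not obviously globally constant — this is the delicate point. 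To force a genuinely constant row and column sum one instead scales: replace $M$ by $\alpha\, \wt P R\,\wt Q^\T$ where $R$ is diagonal; the freedom in $R$ (which still must satisfy $R\wt B = \wt B R$ to preserve commutation, so $R$ is constant on the "connected blocks" of $\wt B$) can be used to equalize the sums, and when $G,H$ are connected $\wt B$ is irreducible so $R$ is scalar, and a single normalization constant suffices. I should present this carefully; the upshot is a nonnegative nonzero $M$ with $A_G M = M A_H$ whose row and column sums are each constant.

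\emph{From the matrix $M$ to the common coarsest quotient.} Conversely, suppose $M\neq 0$ is nonnegative, $A_G M = M A_H$, and $M\one = r\one$, $\one^\T M = c\one^\T$ (counting entries, $r|V(H)| = \sum_{ij}M_{ij} = c|V(G)|$, so $r,c>0$). Rescaling, assume $r=c=1$ after possibly also adjusting to make both $|V(G)|=|V(H)|$ — note constant row and column sums of a nonnegative matrix force the two dimensions to be equal once we also know the total sum matches, so $G$ and $H$ have the same number of vertices and $M$ is doubly stochastic. Then $M$ is a fractional isomorphism in the sense of \eqref{eq:fracisodef}, so by Theorem~\ref{thm1} (equivalence of \eqref{thm1-1} and \eqref{thm1-3}) $G$ and $H$ have in common the coarsest equitable partition, which is exactly $G/\pi\simeq H/\sigma$.

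\emph{Main obstacle.} The easy direction is the converse, which reduces immediately to Theorem~\ref{thm1}. The real work is the forward direction: producing from $G/\pi\simeq H/\sigma$ a matrix whose row \emph{and} column sums are simultaneously and globally constant. The product $\wt P\wt Q^\T$ does the job of intertwining $A_G$ and $A_H$, but its row/column sums are only constant cell-by-cell; I expect the crux to be showing one can post-compose with a suitable diagonal (which must commute with the symmetrized quotient matrix, hence is governed by the connected components of the quotient graph — scalar when $G$ is connected) to make the sums globally constant, and handling the disconnected case by treating components, or alternatively invoking the common finite cover from Theorem~\ref{thm:leighton} and building $M$ as a normalized sum of the two covering projections. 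I would write up the connected case in full and remark that the general case follows componentwise.
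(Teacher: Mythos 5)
Both directions of your proposal have genuine problems. The converse direction is broken at its first move: a nonnegative matrix with constant row sum and constant column sum need \emph{not} be square, and the graphs $G$ and $H$ in this theorem need not have the same number of vertices. For example, $C_4$ and $C_6$ both have coarsest quotient equal to the single loop of weight $2$, and the all-ones $4\times 6$ matrix $J$ satisfies $A_{C_4}J = 2J = JA_{C_6}$ with row sums $6$ and column sums $4$; no rescaling makes it doubly stochastic, and $C_4,C_6$ are certainly not fractionally isomorphic, so the reduction to Theorem~\ref{thm1} cannot be repaired. The paper's route (Theorem~\ref{thm:combinatorialquotient}) is different: normalize $M$ so that $MM^\T$ and $M^\T M$ are doubly stochastic (this only needs constant row and column sums), apply Theorem~\ref{thm:mainthm1} to get $\wt{G/\pi}\simeq\wt{H/\sigma}$ for the induced equitable partitions, and then observe that since $M\one$ is a multiple of $\one$ and the vectors $M\one_{C_r}$ are parallel to the $\one_{D_r}$ with disjoint supports, the ratio $|C_r|/|D_r|$ is a global constant; Lemma~\ref{lemma11} then upgrades the symmetrized isomorphism to a combinatorial one. (Note also that this only yields \emph{some} pair of equitable partitions with isomorphic quotients; passing to the coarsest ones requires Proposition~\ref{prop:eqpartitionquotient}, which you have not addressed.)

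In the forward direction you correctly identify the crux --- that $(\wt P\wt Q^\T\one)_k = \sqrt{|D_r|/|C_r|}$ for $k\in C_r$ is a priori only constant cell by cell --- but your proposed fix is a non-starter: as you yourself observe, a diagonal $R$ commuting with $\wt B$ is scalar on each connected block, so on a connected graph it can only rescale all row sums by the same factor and cannot equalize unequal ones. Either the sums are already globally constant or no such $R$ helps. The missing idea is that the hypothesis concerns the \emph{combinatorial} quotients, not merely the symmetrized ones: $A_{G/\pi}=A_{H/\sigma}$ forces $|C_r|=\lambda|D_r|$ for a single constant $\lambda$ (the content of Lemma~\ref{lemma11}; concretely, $|C_r|p_{rs}=|C_s|p_{sr}$ and likewise for the $D$'s, so the ratio $|C_r|/|D_r|$ is constant along edges of the quotient). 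With that in hand, $\wt P\wt Q^\T$ already has constant row and column sums and no correction is needed. As written, your proof asserts exactly this constancy without deriving it from the hypothesis, which is where all the work lies.
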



\subsection{Application to quantum walks} \label{sec:quantum}

A \textsl{continuous-time quantum walk} on a graph $G$ is the map $U$ that takes nonnegative real numbers $t$ and returns a symmetric unitary matrix, as
\[
	U(t) = \exp(\ii t A_G).
\]
There is a very large literature on the topic, and as this is not the main theme of this paper, we will simply refer to the survey \cite{coutinho2018continuous} and references therein. A major problem in this field is to find graphs $G$ so that for some $t$, an off diagonal entry $U(t)$ has absolute value $1$. In this case, $G$ is said to admit \textsl{perfect state transfer}. The connection between this and equitable partitions was first studied in \cite{ge2011perfect}, and we summarize what is of our interest as follows. Say $\pi$ is an equitable partition of $G$ with partition matrix $P$, and denote $A = A_G$ and $B = A_{\wt{G/\pi}}$. Recall that $B$ is symmetric as in \eqref{eq:sympartition}. Then, as $\exp(\ii t A)$ is a polynomial in $A$,
\[
A \wt{P} = \wt{P} B \implies \exp(\ii t A) \wt{P} = \wt{P} \exp(\ii t B),
\]
whence if two classes of $\pi$ are singletons, corresponding to vertices $u$ and $v$, the equation above says that $|\exp(\ii t A)_{uv}| = 1$ if, and only if, $|\exp(\ii t B)_{\{u\}\{v\}}| = 1$. Thus, if any two graphs admit the same symmetrized quotient $B$ with the two vertices as singletons, as above, then perfect state transfer in one implies it in the other. Bottom line: understanding the combinatorics of pairs of graphs that admit the same symmetrized quotient might lead to new ways of constructing examples of perfect state transfer, perhaps allowing for optimizing the size of the graph, as was shown in limited settings in \cite{coutinho2019quantum,kay2018perfect}, and discussed more lengthily in \cite{pereira2023exploring}.


\section{Symmetrized quotient} \label{sec:symquotient}

Our goal in this section is to prove Theorem~\ref{thm:mainthm1}. Again, this has been motivated by the equivalence between \eqref{thm1-1} and \eqref{thm1-2} in Theorem~\ref{thm1}. We start with definitions, and standard lemma (see \cite[Chapter 6]{scheinerman2013fractional}.

A square matrix $M$ is called \textsl{decomposable} if, for some permutation matrices $P$ and $Q$, we have
$$PMQ = \veto{A & 0\\ 0& B},$$
where $A,B$ are square matrices. If $M$ is not decomposable, then it is called \textsl{indecomposable}. It is quite straightforward to realize that for any square matrix $M$, there is a permutation matrix $P$ so that $P\8MP$ is a block matrix, each of its blocks indecomposable.

A square matrix $M$ is called  \textsl{reducible} if, for some permutation matrix $P$, we have
$$PMP\8 = \veto{A & C\\ 0& B},$$
where $A,B$ are square matrices, and $C$ is of appropriate dimension. If $M$ is not reducible, then it is called \textsl{irreducible}, and if $PM$ is irreducible for all permutation matrices $P$, we say that $M$ is \textsl{strongly irreducible}.

\begin{lemma}[see \cite{scheinerman2013fractional}, Proposition 6.2.1] \label{lem:indecomposable}
If $M$ is doubly stochastic and indecomposable, then $M$ is strongly irreducible.
\end{lemma}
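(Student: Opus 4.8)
The plan is to prove the contrapositive in a sharpened form: a \emph{reducible} doubly stochastic matrix is automatically \emph{decomposable}. Granting this, suppose $M$ is doubly stochastic but not strongly irreducible. Then $PM$ is reducible for some permutation matrix $P$. Since the doubly stochastic matrices are closed under multiplication and contain all permutation matrices, $PM$ is again doubly stochastic, hence decomposable by the sharpened claim; composing the permutation matrices witnessing this decomposition with $P$ exhibits $M$ itself as decomposable, contradicting the hypothesis.

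So let $N$ be doubly stochastic and reducible, say $Q N Q^\T = \veto{A & C\\ 0 & B}$ for a permutation matrix $Q$, with $A$ of order $k$ and $B$ of order $n-k$ and $1\le k\le n-1$; note $QNQ^\T$ is again doubly stochastic. I would then run a mass-counting argument on the entries. Because the lower-left block is zero, each of the first $k$ columns of $QNQ^\T$ has all of its mass inside the block $A$, so the entries of $A$ sum to $k$; dually, each of the last $n-k$ rows is supported on the block $B$, so the entries of $B$ sum to $n-k$. The entries of the whole $n\times n$ doubly stochastic matrix sum to $n$, so the entries of $C$ sum to $n-k-(n-k)$ subtracted from $n-k-k$, i.e. to $0$; since $C$ is nonnegative this forces $C=0$. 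Hence $QNQ^\T$ is block diagonal, and taking left permutation $Q$ and right permutation $Q^\T$ shows $N$ is decomposable.

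I do not anticipate a genuine obstacle; the work is bookkeeping. The two points needing care are: verifying that reducibility of $PM$ passes, after a further conjugation by a permutation matrix, to a block-triangular doubly stochastic matrix with both diagonal blocks square and nonempty (so the counting argument applies), and then tracking the permutation matrices so that the final block-diagonal form is written as $P_1 M P_2$ with $P_1,P_2$ honest permutation matrices, matching the stated definition of decomposable. Both are routine.
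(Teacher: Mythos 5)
The paper gives no proof of this lemma, simply citing Proposition 6.2.1 of \cite{scheinerman2013fractional}; your argument --- a reducible doubly stochastic matrix, after conjugation to block upper-triangular form, has off-diagonal block $C$ with entry sum $n-k-(n-k)=0$ and hence $C=0$, so it is decomposable, and this transports back through the permutation $P$ witnessing the failure of strong irreducibility since $PM$ is again doubly stochastic --- is correct and is essentially the standard proof from that source. Your one garbled sentence (``$n-k-(n-k)$ subtracted from $n-k-k$'') should just read that the entries of $C$ sum to $n$ minus $k$ minus $(n-k)$, i.e.\ to $0$; the conclusion $C=0$ by nonnegativity is right as stated.
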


This will be useful to us when applied together with the celebrated Perron-Frobenius theorem.

\begin{theorem}[see for instance \cite{horn2012matrix}, Chapter 8]
	Let $M$ be an irreducible, nonnegative matrix. Then among all eigenvalues of $M$ with maximum absolute value, there is one which is positive and its multiplicity is one. This eigenvalue has an associated eigenvector that is entry-wise positive. Further, eigenvectors associated with other eigenvalues are not nonnegative. \label{thm:PF}
\end{theorem}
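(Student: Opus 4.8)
The statement to prove is the Perron--Frobenius theorem for an irreducible nonnegative matrix $M$ of order $n$, so my plan is to follow the classical variational route built on the Collatz--Wielandt functional, with irreducibility entering through a single combinatorial fact. First I would record that fact: since $M$ is nonnegative and irreducible, the directed graph with an arc $i\to j$ whenever $M_{ij}>0$ is strongly connected, and consequently every entry of $(I+M)^{n-1}$ is strictly positive. This positivity statement is the engine of the whole argument. To produce the candidate eigenvalue, I would set
\[
r=\sup\{\lambda\ge 0: Mx\ge \lambda x \text{ for some } x\ge 0,\ x\neq 0\}=\sup_{x\ge 0,\,x\neq 0} f(x),\qquad f(x)=\min_{i:\,x_i>0}\frac{(Mx)_i}{x_i}.
\]
By homogeneity I may restrict $x$ to the simplex $\Delta$. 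The functional $f$ is upper semicontinuous but can fail to be continuous where coordinates vanish, and the main technical obstacle of the existence step is exactly this gap. I would bypass it by replacing $\Delta$ with its image $\mathcal{K}=(I+M)^{n-1}\Delta$, a compact set lying in the open positive orthant: since applying the positive matrix $(I+M)^{n-1}$ cannot decrease $f$, the supremum over $\Delta$ equals that over $\mathcal{K}$, and on $\mathcal{K}$ the functional is continuous, so it is attained at some $z>0$ with $f(z)=r$, i.e. $Mz\ge rz$.

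Next I would promote this inequality to an equality and establish positivity of $r$. If $Mz-rz$ were nonzero, it would be nonnegative with a strictly positive entry, and multiplying by $(I+M)^{n-1}$ would yield $Mw>rw$ strictly for $w=(I+M)^{n-1}z>0$, giving $f(w)>r$ and contradicting maximality of $r$. Hence $Mz=rz$ with $z>0$, and $r>0$ because $Mz>0$ forces $r=(Mz)_i/z_i>0$. To see that $r$ is the maximum modulus among all eigenvalues, take any eigenpair $My=\lambda y$ (possibly complex); the entrywise triangle inequality gives $M|y|\ge |\lambda|\,|y|$, so $f(|y|)\ge|\lambda|$ and therefore $|\lambda|\le r$. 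Thus $r$ is a positive eigenvalue attaining the maximum absolute value, with entrywise positive eigenvector $z$.

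Applying the same construction to $M^\T$, which is again irreducible, produces a strictly positive left eigenvector $x$ with $x^\T M=r\,x^\T$; this vector is the tool for the remaining two claims. For the statement that eigenvectors of other eigenvalues are not nonnegative, suppose $\lambda\neq r$ had an eigenvector $y\ge 0$, $y\neq 0$. Then $\lambda\,x^\T y=x^\T M y=r\,x^\T y$, and since $x>0$ and $y\ge 0$ is nonzero we have $x^\T y>0$, forcing $\lambda=r$, a contradiction.

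Finally I would address multiplicity one, which I expect to be the subtler part because it requires algebraic and not merely geometric simplicity. For the geometric direction, given a real eigenvector $z'$ for $r$, choose the scalar $t$ so that $z-t z'\ge 0$ has a zero coordinate; as an $r$-eigenvector it is scaled by $(I+M)^{n-1}$ and hence, being nonnegative, is either strictly positive or identically zero, so it must be zero and $z'$ is a multiple of $z$. To upgrade to algebraic multiplicity one, I would rule out a generalized eigenvector: if $(M-rI)w=z$ for some $w$, then pairing with the positive left eigenvector gives $x^\T z=x^\T(M-rI)w=(x^\T M-r\,x^\T)w=0$, which is impossible since $x,z>0$ give $x^\T z>0$. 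Hence the Jordan block at $r$ is $1\times 1$ and the algebraic multiplicity equals one. The two genuine obstacles are therefore the semicontinuity gap in $f$, which I resolve by conjugating the domain through the positive matrix $(I+M)^{n-1}$, and the passage from geometric to algebraic simplicity, which I resolve cleanly through the left--right eigenvector orthogonality relation $x^\T z>0$.
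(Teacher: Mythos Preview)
The paper does not prove this statement; it is quoted as the classical Perron--Frobenius theorem with a reference to \cite{horn2012matrix}, Chapter~8, and is used as a black box throughout. There is therefore no in-paper proof to compare against.

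Your proposal is a correct and well-organized rendition of the standard Collatz--Wielandt argument, essentially the one found in the cited reference. The key steps---positivity of $(I+M)^{n-1}$ from irreducibility, attainment of the supremum on the image simplex $(I+M)^{n-1}\Delta$, the upgrade from $Mz\ge rz$ to equality via strict positivity, the spectral-radius bound through $M|y|\ge |\lambda||y|$, and the use of the positive left eigenvector both to exclude nonnegative eigenvectors at other eigenvalues and to kill a putative Jordan chain at $r$---are all sound. Two minor cosmetic points: when showing geometric simplicity you should remark that any complex $r$-eigenvector has real and imaginary parts that are real $r$-eigenvectors, so it suffices to treat real $z'$; and the claim $r>0$ tacitly uses $n\ge 2$ (or $M\neq 0$), since an irreducible $1\times 1$ zero matrix has $r=0$. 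Neither affects the validity of the argument for the use the paper makes of the theorem.
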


Recall from Section~\ref{sec:intro} that if $\pi$ is an equitable partition and $S$ denotes its normalized partition matrix, then $\wt{G/\pi}$ denotes the symmetric weighted graph whose adjacency matrix is given by
\[
	A_{\wt{G/\pi}} = S^\T A_G S.
\]

We restate Theorem~\ref{thm:mainthm1} for convenience.

\addtocounter{theorem2}{1}
\begin{theorem2}\label{conditions for equal quotient}
    Let $G$ and $H$ be graphs with adjacency matrices $A= A_G$ and $B = A_H$. There are equitable partitions $\pi$ in $G$ and $\sigma$ in $H$ with equal symmetrized quotient graphs, that is $\wt{G/\pi} \simeq \wt{H/\sigma}$, if, and only if, there is a nonnegative matrix $M$ satisfying:
    \begin{enumerate}[(i)]
        \item Both $MM\8$ and $M\8M$ are doubly stochastic. \label{cond1}
        \item $AM=MB$. \label{cond2}
    \end{enumerate}
\end{theorem2}

\begin{proof}

First we prove that if $G$ and $H$ have equal symmetrized quotients, then conditions \eqref{cond1} and \eqref{cond2} hold. This is quite straightforward.

Let $S$ and $T$ be normalized matrices related to equitable partitions of $G$ and $H$, respectively, satisfying the equation
$$S\8AS=T\8BT.$$
Recalling that $S\8S = T\8T = I$, and that $A$ commutes with $SS\8$ and $B$ commutes with $TT\8$ (as stated right after \eqref{eq:sympartition}), it follows that
\[
A(ST\8)=(ST\8)B	
\]
The matrix $M = ST\8$ is nonnegative, as both $S$ and $T$ are nonnegative, and
$$ST\8(ST\8)\8=S(T\8T)S\8=SS\8,$$
$$(ST\8)\8ST\8=T(S\8S)T\8=TT\8,$$
which are readily seen to be both doubly-stochastic.

Moving on to the converse, suppose a matrix $M$ satisfying conditions \eqref{cond1} and \eqref{cond2} exists. In this case, $MM\8$ commutes with $A$ since
    $$AMM\8=MBM\8=(MBM\8)\8=MM\8A.$$
    
From Lemma \ref{lem:indecomposable}, there exists some permutation $P$ such that
$$P\8 (MM\8) P=\begin{pmatrix}
S_1 &  &  &  &  \\
 & S_2 &  &  &  \\
 &  & S_3 &  &  \\
 &  &  & \ddots &  \\
 &  &  &  & S_k 
\end{pmatrix} ,$$
where $S_r$ is irreducible for all $r$. Let us denote 

$$P\8 A P = \begin{pmatrix}
A_{11} & A_{12} & \dots & A_{1k} \\
A_{21} & A_{22} & \dots & A_{2k} \\
 &  & \ddots &  \\
A_{k1} & A_{k2} & \dots & A_{kk} 
\end{pmatrix},$$
with the same block sizes as $P\8(MM\8)P$. The equation $AMM\8=MM\8A$ is equivalent to
$$S_rA_{rs}=A_{rs}S_s\qquad \forall r,s.$$
Multiplying by $\one$ we get
$$S_rA_{rs}\one=A_{rs}S_s\one,$$
$$S_rA_{rs}\one=A_{rs}\one,$$
and because $S_r$ is irreducible, by the Perron-Frobenius Theorem we have that $A_{rs}\one=c\one$, which means that these blocks correspond to a partition $\pi$ of $V(G)$ which is equitable (this previous argument is essentially equivalent to the implication \eqref{thm1-1} to \eqref{thm1-2} from Theorem~\ref{thm1}, and we reference \cite[Section 6]{scheinerman2013fractional} for a detailed exposition).

Let us denote $\pi = \{C_1,\dots,C_k\}$, each $C_i \subseteq V(G)$. By an analogous argument, the partition of $M\8M$ into indecomposable blocks induces an equitable partition $\sigma$ of $H$, say $\sigma = \{D_1,\dots,D_m\}$, $D_i \subseteq V(H)$.

In $\pi$, we will have that $a$ and $b$ are in the same cell if, and only if, they are in some irreducible block of $MM\8$, that is, $(MM\8)^\ell_{a,b}\neq 0$ for some $\ell$. Let us say this cell is $C_r$, and denote by $\one_r$ its indicator vector. We will show that $M\8\one_r$ corresponds in some way to a cell of $\sigma$. 

\begin{itemize}
	\item $M\8\one_r$ is supported in one cell of $\sigma$.
	\item[] We suppose that $a$ and $b$ are in the support of $M\8\one_r$ and prove that $(M\8M)^\ell_{a,b}\neq 0$ for some $\ell$. The support is non-empty, for $MM\8\one_r\neq 0$. Using its decomposition in blocks, each $1$-eigenvector $v$ satisfies
$$P\begin{pmatrix}
S_1 &  &  &  &  \\
 & S_2 &  &  &  \\
 &  & S_3 &  &  \\
 &  &  & \ddots &  \\
 &  &  &  & S_k 
\end{pmatrix} P\8v=v,$$
from where each block of $P\8 v$ is an eigenvector of $S_s$. As $S_s$ is indecomposable (and doubly-stochastic), by Perron-Frobenius, we must have that $v$ is constant in these blocks. Hence, $\{\one_s\}_s$ forms a basis for the $1$-eigenspace of $MM\8$ and the related eigenprojector is $\sum \alpha_s \one_s\one_s\8$, for $\alpha_s=\nm{\one_s}^{-2}>0$.

Let $f(x)=\sum c_\ell x^\ell$ be such that $f(MM\8)=\sum\alpha_s\one_s\one_s\8$. If $a$ and $b$ are not in the same indecomposable block of $M\8M$, then $((M\8M)^\ell)_{a,b}=0$ for any $\ell>0$, whence
\begin{align*}
    0&=\prt{\sum c_\ell (M\8M)^{\ell+1}}_{a,b}\\
    &=\prt{M\8\prt{\sum c_\ell (MM\8)^\ell}M}_{a,b}\\
    &=\prt{M\8\prt{\sum \alpha_s\one_s\one_s\8} M}_{a,b}\\
    &\geq \alpha_r\prt{M\8\one_r\one_r\8M}_{a,b}\\
    &=\alpha_r(M\8\one_r)_a(M\8\one_r)_b\\
    &>0,
\end{align*}
a contradiction. Hence, we may conclude that both $a$ and $b$ belong to the same cell within the equitable partition of $B$ associated with $M\8M$.

	\item $M\8\one_r$ is constant on each cell of $\sigma$.
	\item[] Let $\eta : \{1,\dots,k\} \to \{1,\dots,m\}$ defined by having $D_{\eta(r)}$ be the cell of $\sigma$ that contains the support of $M\8\one_r$. We denote the indicator vector of $D_{\eta(r)}$, which is in $\R^{V(H)}$, by $\one_{\eta(r)}$.
	
	Considering the expression $(M\8M)(M\8\one_r) = M\8(MM\8\one_r) = M\8\one_r$, we observe that $M\8\one_r$ is an eigenvector of $M\8M$, just like $\one_{\eta(r)}$, and both are nonzero only on the entries corresponding to the submatrix of $M\8M$ with entries in $D_{\eta(r)}$, which is irreducible. Therefore $M\8\one_r$ is parallel to $\one_{\eta(r)}$ by the Perron-Frobenius theorem.
\end{itemize}

We shall now demonstrate that the function $\eta$ establishes an isomorphism between $\wt{G/\pi}$ and $\wt{H/\sigma}$.

Since $\sum\one_r=\one$ and $M\8\one$ is supported on all entries (if $M\8$ has a row equal to zero, then $M\8M$ would not be doubly-stochastic), and since $M\8$ is nonnegative, it follows that all cells of the equitable partitions are covered by $\{\one_{\eta(r)}\}_r$, or, in others words, $\eta$ is onto.

Also, $\eta$ is one-to-one, since:
$$\one_{\eta(r)}=\one_{\eta(s)}\implies M\8\one_r\parallel M\8\one_s\implies MM\8\one_r\parallel MM\8\one_s\implies r=s.$$

In particular $k = m$.

At last, we shall prove that $\eta$ is a graph isomorphism. We begin by noting that
$$\nm{M\8\one_r}^2=\one_r\8 (MM\8\one_r)=\one_r\8\one_r,$$
from where
$$\frac{M\8\one_r}{\sqrt{\one_r\8\one_r}}=\frac{\one_{\eta(r)}}{\sqrt{\one_{\eta(r)}\8\one_{\eta(r)}}}.$$

Thus, a direct calculation gives us

\begin{align*}
    \frac{\one_{\eta(r)}\8}{\sqrt{\one_{\eta(r)}\8\one_{\eta(r)}}}B\frac{\one_{\eta(s)}}{\sqrt{\one_{\eta(s)}\8\one_{\eta(s)}}}&=\frac{\one_r\8}{\sqrt{\one_r\8\one_r}}MBM\8\frac{\one_s}{\sqrt{\one_s\8\one_s}}\\
    &=\frac{\one_r\8}{\sqrt{\one_r\8\one_r}}AMM\8\frac{\one_s}{\sqrt{\one_s\8\one_s}}\\
    &=\frac{\one_r\8}{\sqrt{\one_r\8\one_r}}A\frac{\one_s}{\sqrt{\one_s\8\one_s}}
\end{align*}
or, likewise
\[
	(A_{\wt{G/\pi}})_{rs} = (A_{\wt{H/\sigma}})_{\eta(r)\eta(s)}.
\]
which completes the proof that $\eta$ is an isomorphism.
\end{proof}

\begin{corollary}
   Let $G$ and $H$ be graphs with adjacency matrices $A= A_G$ and $B = A_H$. Assume there is a nonnegative matrix $M$ satisfying:
    \begin{enumerate}[(i)]
        \item Both $MM\8$ and $M\8M$ are doubly stochastic. 
        \item $AM=MB$.
    \end{enumerate}
	Consider the graph $Z_M$ whose adjacency matrix is the support of
    $$\veto{& M\\ M\8&}.$$
    Let $\pi = \{C_1,\dots,C_k\}$ and $\sigma=\{D_1,\dots,D_k\}$ be the equitable partitions as in Theorem~\ref{thm:mainthm1}. Then $Z_M$ is a bipartite graph with $k$ connected components, each equal to $C_r \cup D_r$, for $r \in \{1,\dots,k\}$.
\end{corollary}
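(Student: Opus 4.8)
The plan is to read off the connected-component structure of $Z_M$ directly from the block decompositions of $MM^\T$ and $M^\T M$ already obtained in the proof of Theorem~\ref{conditions for equal quotient}. Recall that $Z_M$ has vertex set $V(G)\sqcup V(H)$, and its adjacency matrix (the support of $\left(\begin{smallmatrix}&M\\M^\T&\end{smallmatrix}\right)$) records exactly which entries of $M$ are nonzero. The key fact to extract from the theorem's proof is: two vertices $a,b\in V(G)$ lie in the same cell $C_r$ of $\pi$ iff $(MM^\T)^\ell_{a,b}\neq 0$ for some $\ell\ge 0$, and symmetrically for $M^\T M$ on $V(H)$; moreover $M^\T\one_r$ is (a positive multiple of) $\one_{\eta(r)}$, i.e. the support of column block $C_r$ of $M^\T$ is exactly $D_{\eta(r)}=D_r$ (after relabelling so that $\eta$ is the identity, which we may do since $\eta$ was shown to be a bijection).

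First I would show each $C_r\cup D_r$ is contained in a single connected component of $Z_M$. Since $C_r$ is an indecomposable (hence irreducible) block of the symmetric nonnegative matrix $MM^\T$, any two $a,b\in C_r$ are joined by a walk in the graph on $V(G)$ with adjacency $\mathrm{supp}(MM^\T)$; but an edge $ab$ there means $(MM^\T)_{ab}=\sum_c M_{ac}M_{bc}>0$, so there is some $c\in V(H)$ with $M_{ac},M_{bc}>0$, giving a path $a$–$c$–$b$ in $Z_M$. Hence all of $C_r$ lies in one component; the same argument with $M^\T M$ puts all of $D_r$ in one component; and since $M^\T\one_r=\alpha\,\one_r$ for some $\alpha>0$ (shown in the theorem), there is at least one nonzero entry $M_{ac}$ with $a\in C_r$, $c\in D_r$, linking the two. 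So $C_r\cup D_r$ sits inside one component.

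Next I would show $C_r\cup D_r$ is closed, i.e. no edge of $Z_M$ leaves it: if $a\in C_r$ and $M_{ac}\neq 0$, I must check $c\in D_r$. This is precisely the statement that the support of $M^\T\one_r$ is contained in $D_r$, which was proved inside the theorem (the bullet ``$M^\T\one_r$ is supported in one cell of $\sigma$'', combined with identifying that cell as $D_{\eta(r)}=D_r$). Symmetrically, if $c\in D_r$ and $M_{ac}\neq 0$ then $a\in C_r$, because $M\one_r$ (now $\one_r$ the indicator of $D_r$) is supported in $C_{\eta^{-1}(r)}=C_r$ by the same argument applied to $M^\T M$ and $M$ in place of $M$ and $M^\T$. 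Therefore $C_r\cup D_r$ is a union of connected components; combined with the previous paragraph it is exactly one connected component. Running over $r=1,\dots,k$ and using $\{C_r\}$ partitions $V(G)$ and $\{D_r\}$ partitions $V(H)$, we get that $Z_M$ has exactly $k$ components, the $r$-th being $C_r\cup D_r$. Bipartiteness is immediate since $Z_M$ has no edges within $V(G)$ or within $V(H)$ by construction.

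The only mild subtlety — not really an obstacle — is bookkeeping around the bijection $\eta$: one should fix the labelling of $\sigma$ so that $\eta(r)=r$, which is legitimate because Theorem~\ref{conditions for equal quotient} established $\eta$ is a bijection and $k=m$, and because columns of a partition matrix may be freely reordered. Everything else is a direct transcription of facts already assembled in the theorem's proof, so the corollary follows with essentially no new computation.
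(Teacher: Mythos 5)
Your proposal is correct and follows essentially the same route as the paper's proof: both rest on the characterization of the cells via nonvanishing powers of $MM^\T$ and $M^\T M$ (even walks in $Z_M$) and on $M^\T\one_{C_r}$ being a positive multiple of $\one_{D_r}$ to link the two sides. You merely spell out more explicitly the ``closedness'' step (no edge of $Z_M$ leaves $C_r\cup D_r$), which the paper leaves implicit in its ``if and only if'' phrasing.
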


\begin{proof}
	We observe that two vertices $a,b \in V(G)$ are in the same cell if and only if there is some $\ell$ for which $(MM\8)^\ell_{a,b}\neq 0$. This is equivalent to having some (even) path between $a$ and $b$ in the graph $Z_M$. Likewise for if these are vertices in $V(H)$.
	
	To see that vertices in $C_r$ are connected to vertices in $D_r$, let $\one_r$ be the indicator vector of $C_r$ in $\R^{V(G)}$. Note that $M\8\one_r$ is parallel to the indicator vector of $D_r$, both nonzero, thus some entry $M^\T$ whose row corresponds to a vertex of $D_r$ and column corresponds to a vertex of $C_r$ is nonzero. In other words, $C_r \cup D_r$ belong to the same connected component.
\end{proof}

\section{Different notions of common quotients} \label{sec:notionsquotient}

Theorems \ref{thm1},\ref{thm:leighton} and \ref{thm:mainthm1} describe equivalent conditions to three relations between graphs $G$ and $H$, namely
\begin{enumerate}[(A)]
	\item There are equitable partitions $\pi$ and $\sigma$ of $G$ and $H$ respectively, with partition matrices $P$ and $S$, so that \label{conditionA}
	\[P\8P = S\8S \quad \text{and} \quad P\8A_GP = S\8 A_H S.\]
	(This is the condition used in Theorem \ref{thm1}.)
	\item There are equitable partitions $\pi$ and $\sigma$ of $G$ and $H$ respectively, with partition matrices $P$ and $S$, so that \label{conditionB} \[(P\8P)^{-1} P\8A_GP = (S\8S)^{-1} S\8 A_H S.\]
	(This is the condition used in Theorem \ref{thm:leighton}.)
	\item There are equitable partitions $\pi$ and $\sigma$ of $G$ and $H$ respectively, with normalized partition matrices $\wt P$ and $\wt S$, so that \label{conditionC} \[\wt{P\8}A_G\wt{P} = \wt{S\8} A_H \wt{S}.\]
	(This is the condition used in Theorem \ref{thm:mainthm1}.)
\end{enumerate}

The attentive reader will have noticed that while Theorem~\ref{thm:leighton} speaks about the coarsest equitable partitions, we have stated relation \eqref{conditionB} with no such restriction. This is fine because of the following result, which should not be surprising but we could not find a reference to it.

Recall the lattice of equitable partitions introduced in Section~\ref{sec:fraciso}, which we denote by $\Pi(G)$. If $\sigma \in \Pi(G)$, we denote its characteristic matrix by $P_\sigma$, and $D_\sigma = (P_\sigma\8 P_\sigma)^{-1}.$ Because a partition corresponds to a partition matrix, we might abuse the notation and say that $P \in \Pi(G)$ if $P = P_\pi$, when $\pi \in \Pi(G)$.

\begin{proposition} \label{prop:eqpartitionquotient}
    Let $G$ be a graph on $n$ vertices. Let $\sigma\in\Pi(G)$. Define the function
    \begin{align*}
    	\varphi\colon\{\pi\in\Pi(G)\colon\sigma\leq \pi\} &\mt \Pi(G/\sigma), \\
    	\pi & \mapsto D_\sigma P_{\sigma}\8P_{\pi}.
    \end{align*}
    Then $\varphi$ is an order-preserving bijection. If $S \in \Pi(G/\sigma)$, its inverse is given by
    \[
    	\varphi^{-1}(S) = P_{\sigma} S,
    \]
    and we also have
    $$(G / \sigma)/\varphi(\pi)\simeq G/\pi.$$
    
\end{proposition}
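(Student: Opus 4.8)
The plan is to verify directly that $\varphi$ and the candidate inverse $S \mapsto P_\sigma S$ are mutually inverse, that both preserve the order, and finally to identify the iterated quotient. First I would check that $\varphi$ is well-defined: given $\pi$ with $\sigma \le \pi$, every cell of $\sigma$ lies inside a cell of $\pi$, so the columns of $P_\pi$ (indicator vectors of cells of $\pi$) are sums of columns of $P_\sigma$; hence there is a $01$-matrix $R$ with $P_\pi = P_\sigma R$, and each column of $R$ is the indicator of the set of $\sigma$-cells contained in the corresponding $\pi$-cell. This $R$ is exactly $D_\sigma P_\sigma\8 P_\pi$, since $P_\sigma\8 P_\sigma$ is diagonal and $P_\sigma\8 P_\pi$ counts, in row $i$ (cell of $\sigma$) and column $j$ (cell of $\pi$), the size of $C_i$ if $C_i \subseteq C_j'$ and $0$ otherwise. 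Dividing row $i$ by $|C_i|$ leaves a $01$-matrix, so $R = \varphi(\pi)$ is the characteristic matrix of a partition of the vertex set of $G/\sigma$. I then need $R \in \Pi(G/\sigma)$, i.e. this partition is equitable for $A_{G/\sigma}$: from $A_G P_\pi = P_\pi A_{G/\pi}$ and $A_G P_\sigma = P_\sigma A_{G/\sigma}$ together with $P_\pi = P_\sigma R$, we get $P_\sigma A_{G/\sigma} R = A_G P_\sigma R = A_G P_\pi = P_\pi A_{G/\pi} = P_\sigma R A_{G/\pi}$; since $P_\sigma$ has full column rank (left-invertible via $D_\sigma P_\sigma\8$), this gives $A_{G/\sigma} R = R A_{G/\pi}$, so $\mathrm{col}(R)$ is $A_{G/\sigma}$-invariant and $R$ is equitable. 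As a bonus this same identity exhibits $A_{(G/\sigma)/\varphi(\pi)} = A_{G/\pi}$ (using the formula $A_{(G/\sigma)/R} = (R\8 R)^{-1} R\8 A_{G/\sigma} R$ and $R\8 R = (P_\sigma R)\8 (P_\sigma R) \cdot$ adjusted by $D_\sigma$, or more cleanly: $R A_{G/\pi} = A_{G/\sigma} R$ with $R$ of full column rank forces $A_{G/\pi}$ to be the quotient matrix), giving the last claimed isomorphism.

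Next I would check the two maps are inverse to each other. For $\pi$ with $\sigma \le \pi$ we have $\varphi^{-1}(\varphi(\pi)) = P_\sigma \cdot D_\sigma P_\sigma\8 P_\pi = P_\sigma R = P_\pi$, so $\varphi^{-1}\circ\varphi = \mathrm{id}$ on the nose (as partition matrices). Conversely, for $S \in \Pi(G/\sigma)$, set $P := P_\sigma S$; this is a $01$-matrix whose columns are disjoint unions of $\sigma$-cells, hence the characteristic matrix of a partition $\pi$ of $V(G)$ with $\sigma \le \pi$, and the same invariance computation as above (now $A_G P_\sigma S = P_\sigma A_{G/\sigma} S = P_\sigma S A_{G/\sigma/S} = P \cdot(\text{quotient matrix})$) shows $\pi \in \Pi(G)$. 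Then $\varphi(\pi) = D_\sigma P_\sigma\8 P_\sigma S = D_\sigma (P_\sigma\8 P_\sigma) S = S$, so $\varphi\circ\varphi^{-1} = \mathrm{id}$. Order preservation in both directions is then immediate: $\pi_1 \le \pi_2$ iff every column of $P_{\pi_1}$ is a sum of columns of $P_{\pi_2}$, which (factoring $P_{\pi_i} = P_\sigma R_i$ and cancelling the full-rank $P_\sigma$) happens iff every column of $R_1$ is a sum of columns of $R_2$, i.e. iff $\varphi(\pi_1) \le \varphi(\pi_2)$; and symmetrically for $\varphi^{-1}$ since it is the inverse of an order-preserving bijection of finite lattices, hence automatically order-preserving.

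The main obstacle, I expect, is not any single deep step but getting the bookkeeping exactly right: confirming that $D_\sigma P_\sigma\8 P_\pi$ really is a $01$-matrix (this is where $\sigma \le \pi$ is essential — without containment the entries of $P_\sigma\8 P_\pi$ are partial overlaps and do not become $0$ or $1$ after scaling), and carefully justifying the cancellation of $P_\sigma$ on the left, which relies on $P_\sigma$ having linearly independent columns so that $D_\sigma P_\sigma\8$ is a genuine left inverse. Once the factorization $P_\pi = P_\sigma R$ with $R = \varphi(\pi)$ a $01$-partition-matrix is in hand, everything else — equitability of $R$, the inverse formula, order preservation, and the iterated-quotient isomorphism — follows by short linear-algebra manipulations using only $A_G P_\tau = P_\tau A_{G/\tau}$ and the left-invertibility of characteristic matrices.
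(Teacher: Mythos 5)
Your proof is correct and takes essentially the same approach as the paper: both arguments factor $P_\pi = P_\sigma\,\varphi(\pi)$, push the intertwining identities $A_G P_\tau = P_\tau A_{G/\tau}$ through the left inverse $D_\sigma P_\sigma\8$ to obtain equitability of $\varphi(\pi)$ together with $A_{(G/\sigma)/\varphi(\pi)} = A_{G/\pi}$, and verify that $S \mapsto P_\sigma S$ is a two-sided inverse, with order preservation read off from the column-space containments. The only quibble is a harmless direction flip in your order-preservation step: for $\pi_1 \le \pi_2$ (with $\pi_2$ coarser) it is each column of $P_{\pi_2}$ that is a sum of columns of $P_{\pi_1}$, not the reverse, but since you apply the same convention on both sides of the equivalence the conclusion is unaffected.
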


\begin{proof}
    Fix some $\pi\in\Pi(G)$. The fact that $D_\sigma P_{\sigma}\8P_{\pi}$ is a partition matrix of $V(G / \sigma)$ follows immediately --- it is the partition induced by $\pi$ on the classes of $\sigma$. To see that it is an equitable partition, we proceed as follows:
    \begin{align}
    	A_{G / \sigma} (D_\sigma P_{\sigma}\8P_{\pi}) & = (D_\sigma P_{\sigma}\8P_{\sigma}) A_{G / \sigma} (D_\sigma P_{\sigma}\8P_{\pi}) \label{eq3} \\
    	& = (D_\sigma P_{\sigma}\8) A P_{\sigma} (D_\sigma P_{\sigma}\8P_{\pi}) \label{eq4}  \\
    	& = (D_\sigma P_{\sigma}\8) A P_{\pi} \label{eq5} \\
    	& = (D_\sigma P_{\sigma}\8 P_{\pi}) A_{G / \pi}\label{eq6}
    \end{align}
    where \eqref{eq3} follows from $D_\sigma P_{\sigma}\8P_{\sigma} = I$; \eqref{eq4} from the fact that $\sigma$ is an equitable partition, thus $A P_{\sigma} = P_{\sigma} A_{G / \sigma}$; \eqref{eq5} because $P_{\sigma} D_\sigma P_{\sigma}\8$ is an orthogonal projection and the columns of $P_{\pi}$ are in its range; and \eqref{eq6} because $\pi$ is an equitable partition of $G$.
    
    To see that $\varphi$ is one-to-one, a simple calculation gives us
    $$D_\sigma P_\sigma\8 P_\pi=D_\sigma P_\sigma\8 P_\nu \implies P_\sigma D_\sigma P_\sigma\8 P_\pi =P_\sigma D_\sigma P_\sigma\8 P_\nu  \implies P_\pi =P_\nu.$$

    For each $S$ that is a partition matrix of an equitable partition of $G/\sigma$, we have $A_{G /\sigma} S = S C$ for some $C$. Applying $P_\sigma$ we get
    $$P_\sigma A_{G /\sigma} S = P_\sigma S C \implies A_G P_\sigma S = P_\sigma S C.$$
    As each line of $P_{\sigma}$ has only one nonzero entry, we can also conclude that $P_{\sigma} S$ is a matrix of $0$'s and $1$'s. Thus, it is an equitable partition of $G$. As $D_mu P_\sigma\8 P_\sigma S = S$, the image of $P_\sigma S$ is indeed $S$, as we wanted.

    Finally, we note that $\pi\leq\nu$ if and only if $\rng P_\pi \supseteq \rng P_\nu$, and as the composition of functions preserves the inclusion of their range, we conclude that $\varphi$ preserves order.
\end{proof}

As a consequence, it follows that the quotient graph by the coarsest equitable partition of $G$ coincides with the quotient graph by the coarsest equitable partition of any of its quotients graphs by other equitable partitions. We may therefore add one extra equivalent condition to the formulation of Theorem~\ref{thm:leighton}, justifying the phrasing of Condition~\eqref{conditionB}. Motivated partly by the work in \cite{equitable_partition_and_linearoptimization}, we provide a third characterization after the lemma.

\begin{lemma} \label{lemma11}
    Let $G,H$ be graphs with equitable partitions $\pi$ and $\sigma$ and $\varphi\colon C_r\mapsto D_r$ a map between the cells of $\pi$ and $\sigma$. Then $\varphi$ induces an isomorphism from $G/\pi$ to $H/\sigma$ if and only if $\varphi$ induces an isomorphism from $\widetilde{G/\pi}$ to $\widetilde{H/\sigma}$ and there is some constant $\lambda$ for which $|C_r|=\lambda|D_r|$.
\end{lemma}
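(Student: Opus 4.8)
The plan is to exploit the relationship between the combinatorial quotient $A_{G/\pi} = D_\pi P_\pi^\T A_G P_\pi$ and the symmetrized quotient $A_{\widetilde{G/\pi}} = D_\pi^{1/2} P_\pi^\T A_G P_\pi D_\pi^{1/2}$, where $D_\pi = (P_\pi^\T P_\pi)^{-1}$ is the diagonal matrix whose $r$-th entry is $1/|C_r|$. Writing $L_\pi = D_\pi^{1/2}$ (diagonal, positive), we have the conjugacy
\[
A_{\widetilde{G/\pi}} = L_\pi^{-1} A_{G/\pi} L_\pi,
\]
and similarly $A_{\widetilde{H/\sigma}} = L_\sigma^{-1} A_{H/\sigma} L_\sigma$ with $(L_\sigma)_{rr} = |D_r|^{-1/2}$. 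After reordering the cells of $\sigma$ so that $\varphi$ becomes the identity on index sets (which is harmless, as the paper notes, since columns of a partition matrix may be freely permuted), the statement becomes: $A_{G/\pi} = A_{H/\sigma}$ if and only if $A_{\widetilde{G/\pi}} = A_{\widetilde{H/\sigma}}$ and $L_\pi = \sqrt{\lambda}\, L_\sigma$ for some constant $\lambda > 0$ (note $|C_r| = \lambda|D_r|$ is exactly $(L_\pi)_{rr}^{-2} = \lambda (L_\sigma)_{rr}^{-2}$, i.e. $(L_\sigma)_{rr} = \sqrt{\lambda}\,(L_\pi)_{rr}$).

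First I would prove the easy direction. If $A_{G/\pi} = A_{H/\sigma}$, then the weighted quotient graphs are literally equal, and since symmetrizing a given matrix by diagonal conjugation yields a unique symmetric result (the remark made after \eqref{eq:sympartition} in the excerpt), we get $A_{\widetilde{G/\pi}} = A_{\widetilde{H/\sigma}}$; moreover an isomorphism of combinatorial quotients must match up cell sizes, because the row sums of $A_{G/\pi}$ record the degrees $\sum_j p_{ij}$ while the column sums are $\sum_j p_{ji}$, and in fact one can recover the vector of cell sizes up to scaling from the quotient. Concretely, the all-ones vector relation $P_\pi^\T P_\pi \one_{\text{sizes}}$-type identity, or more directly: the left Perron-type eigenvector argument. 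I would use the cleanest available fact: $A_{G/\pi}$ has left eigenvector $(|C_1|,\dots,|C_k|)$ for... actually the robust statement is that $P_\pi^\T A_G P_\pi$ is symmetric, hence $D_\pi^{-1} A_{G/\pi}$ is symmetric, i.e. $A_{G/\pi}^\T = D_\pi^{-1} A_{G/\pi} D_\pi$. So from $A_{G/\pi} = A_{H/\sigma}$ we get $D_\pi^{-1} A_{G/\pi} D_\pi = D_\sigma^{-1} A_{G/\pi} D_\sigma$, whence $D_\sigma D_\pi^{-1}$ commutes with $A_{G/\pi}$. Assuming the quotient is connected (irreducible) — which one may reduce to, handling components separately — Perron-Frobenius forces $D_\sigma D_\pi^{-1}$ to be a scalar on each component, giving the constant $\lambda$.

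For the converse, suppose $A_{\widetilde{G/\pi}} = A_{\widetilde{H/\sigma}} =: B$ and $L_\sigma = \sqrt{\lambda}\, L_\pi$. Then
\[
A_{G/\pi} = L_\pi B L_\pi^{-1} = (\sqrt{\lambda}\,L_\sigma)\, B\, (\sqrt{\lambda}\,L_\sigma)^{-1} = L_\sigma B L_\sigma^{-1} = A_{H/\sigma},
\]
so the combinatorial quotients coincide and $\varphi$ is the claimed isomorphism. The main obstacle is the connectivity/irreducibility hypothesis needed for the Perron-Frobenius step in the easy direction: a quotient graph need not be connected or irreducible as a directed weighted graph, so I would either (a) observe that $\widetilde{G/\pi}$ decomposes into connected components, argue the scalar-ratio claim on each, and then note that an \emph{isomorphism} of the full quotients must respect components and that the ratio constant can be taken uniform because the symmetrized quotient already fixes the within-component geometry — leaving only a global scale per component, which a single isomorphism of combinatorial quotients pins to one value — or (b) restrict the lemma's statement, as is implicitly reasonable, to the connected case and remark on the reduction. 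I would go with (a), since the bookkeeping is light: the key identity $A_{G/\pi}^\T = D_\pi^{-1} A_{G/\pi} D_\pi$ does all the work, and everything else is diagonal-matrix algebra.
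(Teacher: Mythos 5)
Your argument is correct in its core and takes a genuinely different, more linear-algebraic route than the paper's. The paper computes entrywise: $(A_{\wt{G/\pi}})_{rs}=N(C_r,C_s)/\sqrt{|C_r||C_s|}$ versus $(A_{G/\pi})_{rs}=N(C_r,C_s)/|C_r|$, and reads off that, once the symmetrized quotients agree, the combinatorial quotients agree exactly when the correction factor $\sqrt{|D_r|/|C_r|}\cdot\sqrt{|C_s|/|D_s|}$ equals $1$. You package the same information into the identities $A_{\wt{G/\pi}}=L_\pi^{-1}A_{G/\pi}L_\pi$ and $A_{G/\pi}^{\T}=D_\pi^{-1}A_{G/\pi}D_\pi$, and extract the cell-size condition from the commutation of $D_\sigma D_\pi^{-1}$ with the quotient matrix (for which you do not even need Perron--Frobenius: a positive diagonal commuting with a matrix must be constant along the support of that matrix). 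Your converse direction, conjugating by $L_\pi=\lambda^{-1/2}L_\sigma$ and letting the scalar cancel, is complete and if anything cleaner than the paper's.

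The one genuine issue is the one you flagged yourself, and your proposed repair (a) does not close it. The commutation argument only forces $|C_r|/|D_r|$ to be constant on each connected component of the quotient, and an isomorphism of combinatorial quotients does \emph{not} pin the per-component ratios to a common value. Take $G=C_3\sqcup C_4$ and $H=C_5\sqcup C_6$, each partitioned into its two cycles: all cells are equitable, $A_{G/\pi}=A_{H/\sigma}=\mathrm{diag}(2,2)$, so $\varphi$ is an isomorphism of both the combinatorial and the symmetrized quotients, yet $|C_1|/|D_1|=3/5\neq 2/3=|C_2|/|D_2|$. So the ``only if'' direction of the lemma genuinely requires the quotient to be connected, and your option (b) is the honest fix. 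For what it is worth, the paper's own proof has exactly the same defect, hidden where it asserts that equality of the combinatorial quotients is equivalent to the correction factor being $1$ \emph{for all} $r,s$: that equivalence only holds for pairs with $N(C_r,C_s)\neq 0$.
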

\begin{proof}
	First off, recall that if $G/\pi$ and $H/\sigma$ are isomorphic, then we may assume $A_{G / \pi} = A_{H / \sigma}$, and because there is only one way to symmetrize these matrices by diagonal similarity, it follows that $A_{\wt{G / \pi}} = A_{\wt{H / \sigma}}$, so it follows that $\widetilde{G/\pi}$ and $\widetilde{H/\sigma}$ are isomorphic.

    We observe that $\widetilde{G/\pi}\simeq\widetilde{H/\sigma}$ is equivalent to having a convenient indexing which satisfies
    $$\frac{N(C_r,C_s)}{\sqrt{|C_r||C_s|}}=\frac{N(D_r,D_s)}{\sqrt{|D_r||D_s|}}\qquad\forall r,s;$$
    which is also equivalent to
    $$\frac{N(C_r,C_s)}{|C_r|}=\prt{\frac{\sqrt{|D_r|}}{\sqrt{|C_r|}}\cdot\frac{\sqrt{|C_s|}}{\sqrt{|D_s|}}}\frac{N(D_r,D_s)}{|D_r|}\qquad\forall r,s.$$
    Recalling that
    $$(A_{G/\pi})_{r,s}=\frac{N(C_r,C_s)}{|C_r|},$$
    we have an isomorphism in the non-symmetrized quotient if and only if
    $$\frac{\sqrt{|D_r|}}{\sqrt{|C_r|}}\cdot\frac{\sqrt{|C_s|}}{\sqrt{|D_s|}}=1\qquad\forall r,s;$$
    which is equivalent to the desired condition.
\end{proof}

Now, we can enunciate and prove the following:

\begin{theorem} \label{thm:combinatorialquotient}
	If $G$ and $H$ are graphs, then the following are equivalent:
	\begin{enumerate}[(i)]
		\item If $\pi$ and $\sigma$ are the coarsest equitable partitions of $G$ and $H$ respectively, then $G / \pi \simeq H /\sigma$. \label{comq1}
		\item There are equitable partitions $\pi$ and $\sigma$ of $G$ and $H$ respectively such that $G / \pi \simeq H /\sigma$. \label{comq2}
		\item There is a nonnegative and nonzero matrix $M$ with constant row sum and constant column sum such that $A_{G} M = M A_{H}$. \label{comq3}
	\end{enumerate}
\end{theorem}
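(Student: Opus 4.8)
The plan is to establish the cycle of implications $\eqref{comq1} \Rightarrow \eqref{comq2} \Rightarrow \eqref{comq3} \Rightarrow \eqref{comq1}$, relying on Proposition~\ref{prop:eqpartitionquotient} for the first and on Theorem~\ref{thm:mainthm1} together with Lemma~\ref{lemma11} for the rest. The implication \eqref{comq1}$\Rightarrow$\eqref{comq2} is immediate, since the coarsest equitable partitions are in particular equitable partitions. The implication \eqref{comq2}$\Rightarrow$\eqref{comq1} is exactly the content of the consequence drawn from Proposition~\ref{prop:eqpartitionquotient}: if $G/\pi \simeq H/\sigma$ for some equitable partitions, then the coarsest equitable partition of $G/\pi$ equals (is isomorphic to) the coarsest equitable partition of $H/\sigma$, and by the Proposition these quotients coincide with the quotients of $G$ and $H$ by their own coarsest equitable partitions. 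So the real work is the equivalence of \eqref{comq2} and \eqref{comq3}.

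For \eqref{comq2}$\Rightarrow$\eqref{comq3}: suppose $\pi$ and $\sigma$ are equitable partitions with $G/\pi \simeq H/\sigma$ via a cell bijection $\varphi$. Then $\widetilde{G/\pi} \simeq \widetilde{H/\sigma}$ (as noted in Lemma~\ref{lemma11}), so by the ``only if'' direction of Theorem~\ref{thm:mainthm1} there is a nonnegative $M$ with $A_G M = M A_H$ and $MM^\T$, $M^\T M$ doubly stochastic; inspecting the proof, $M = S T^\T$ where $S, T$ are the normalized partition matrices. The point is that $G/\pi \simeq H/\sigma$ gives the extra information $|C_r| = \lambda |D_r|$ for a single constant $\lambda$ (this is exactly Lemma~\ref{lemma11}), and I want to use this to rescale $M$ so that it acquires constant row and column sums. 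Concretely, $M = S T^\T = P_\pi D_\pi^{1/2} D_\sigma^{1/2} P_\sigma^\T$ up to the correspondence $\varphi$; the $(a,b)$ entry is nonzero precisely when $a \in C_r$, $b \in D_r$ for the corresponding pair, and equals $(|C_r||D_r|)^{-1/2}$. The row sum over row $a \in C_r$ is then $|D_r| \cdot (|C_r||D_r|)^{-1/2} = \sqrt{|D_r|/|C_r|} = \lambda^{-1/2}$, constant; similarly every column sum is $\sqrt{|C_r|/|D_r|} = \lambda^{1/2}$, constant. So $M$ itself already has constant row and column sums — no rescaling needed — and it is nonnegative, nonzero, and intertwines $A_G$ and $A_H$, giving \eqref{comq3}.

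For \eqref{comq3}$\Rightarrow$\eqref{comq2}: given nonnegative nonzero $M$ with all row sums equal to some $c > 0$ and all column sums equal to some $d > 0$, counting total entries shows $c \cdot n_G = d \cdot n_H$, and after normalizing (replace $M$ by $M/\sqrt{cd}$, or rather by $(1/c)M$ suitably) one can arrange that $(1/c)M$ has all row sums $1$, hence $\frac1c M \one = \one$ and $\frac1d M^\T \one = \one$. Then $N = \frac{1}{\sqrt{cd}} M$ satisfies $N\one_{V(H)} = \sqrt{d/c}\,\one_{V(G)}$ and $N^\T \one_{V(G)} = \sqrt{c/d}\,\one_{V(H)}$, from which $NN^\T \one = \one$ and $N^\T N \one = \one$; since $N$ is nonnegative this makes $NN^\T$ and $N^\T N$ doubly stochastic, and $A_G N = N A_H$ still holds. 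By the ``if'' direction of Theorem~\ref{thm:mainthm1} we obtain equitable partitions $\pi$, $\sigma$ with $\widetilde{G/\pi} \simeq \widetilde{H/\sigma}$, and moreover (by the Corollary and the structure in the proof) the cell bijection $\eta$ satisfies $N^\T \one_r \parallel \one_{\eta(r)}$. I then need to extract from the constant-row-sum hypothesis that $|C_r| = \lambda|D_r|$ for a \emph{single} $\lambda$ independent of $r$, so that Lemma~\ref{lemma11} upgrades the symmetrized isomorphism to an honest isomorphism $G/\pi \simeq H/\sigma$. This should follow by evaluating $N\one = \sqrt{d/c}\,\one$ against the indicator $\one_r$: since $N$ restricted to the block $C_r \times D_r$ has all entries equal to $(|C_r||D_r|)^{-1/2}$ (up to the normalization constant), the row sum condition forces $\sqrt{|D_r|/|C_r|}$ to be the same constant for all $r$, which is precisely $|C_r| = \lambda |D_r|$.

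The main obstacle I anticipate is the bookkeeping in \eqref{comq3}$\Rightarrow$\eqref{comq2}: Theorem~\ref{thm:mainthm1} is stated for $0/1$ adjacency matrices and doubly stochastic $MM^\T$, $M^\T M$, so I must be careful that the normalization $N = M/\sqrt{cd}$ genuinely produces doubly stochastic Gram matrices (this uses nonnegativity plus $N\one \parallel \one$ on both sides, which is where $c\,n_G = d\,n_H$ enters), and then that the block structure of $N$ forced by the proof of Theorem~\ref{thm:mainthm1} really does have constant entries on each $C_r \times D_r$ block so that the ratio $|C_r|/|D_r|$ can be read off. Everything else is a direct chaining of results already in the excerpt.
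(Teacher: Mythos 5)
Your proof follows essentially the same route as the paper: (i)$\Leftrightarrow$(ii) via Proposition~\ref{prop:eqpartitionquotient}, (ii)$\Rightarrow$(iii) by computing the row and column sums of $M=ST^\T$ and invoking Lemma~\ref{lemma11}, and (iii)$\Rightarrow$(ii) by normalizing $M$, applying Theorem~\ref{thm:mainthm1}, and using $M\one\parallel\one$ to extract the single constant $\lambda$ needed for Lemma~\ref{lemma11}. One remark on the obstacle you flagged at the end: the matrix $N$ need \emph{not} have constant entries on each $C_r\times D_r$ block (that holds for $ST^\T$ but not for an arbitrary intertwiner), and fortunately you never need it --- the facts $N^\T\one_r=\mu_r\one_{\eta(r)}$ with $\mu_r=\sqrt{|C_r|/|D_r|}$ (from $\|N^\T\one_r\|^2=\one_r^\T NN^\T\one_r=|C_r|$), together with $N^\T\one=\sum_r N^\T\one_r$ being a constant multiple of $\one$ and the supports being disjoint, already force $\mu_r$ to be independent of $r$, which is exactly how the paper argues.
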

\begin{proof}
	As shown in Proposition~\ref{prop:eqpartitionquotient}, the first two conditions are equivalent. 
 
    We start by proving \eqref{comq3} $\implies$ \eqref{comq2}. Normalizing $M$, we may assume that $MM\8$ and $M\8M$ are doubly stochastic. Applying Theorem \ref{thm:mainthm1}, it follows that $\widetilde{G/\pi}\simeq\widetilde{H/\sigma}$, for the equitable partitions $\pi, \sigma$ induced by $M$.

    Let $\one_{C_r}$ be the indicator of $C_r$. By the argument in the proof of Theorem \ref{thm:mainthm1}, we have $M\one_{C_r}$ parallel to the indicator vector of the cell isomorphic to $C_r$ in the partition $\sigma$ of $H$, which we will denote by $D_r$. As $M\one$ is a multiple of $\one$, and the vectors $M\one_{C_r}$ have disjoint support, then for each $r$, $M\one_{C_r}$ must be a multiple of $\one_{D_r}$ by a factor that is independent of $r$. Hence, using the previous lemma we obtain the result.

    For the converse, recall that $A_G\wt{P} \wt{S}\8 = \wt{P} \wt{S}\8A_H$, where $\wt{P},\wt{S}$ are the normalized matrices of $\pi,\sigma$. A simple calculation gives us that
    \begin{align*}
        (\wt{P} \wt{S}\8 \one)_k & = \sum_{r,s} \wt{P}_{k,r}\wt{S}_{r,s}\\
        &=\frac{1}{\sqrt{|C_r|}}\sum_{s\in D_r}\frac{1}{\sqrt{|D_r|}},\quad\text{where $k\in C_r$,}\\
        &=\sqrt{\frac{|D_r|}{|C_r|}}\\
        &=\gamma.
    \end{align*}
    The existence of a value of $\gamma$ independent of $r$ follows from the lemma above. An analogous argument shows that $\wt{P} \wt{S}\8$ is column stochastic.
\end{proof}

Coming back to the conditions in the beginning of this section, note the obvious implication $\eqref{conditionA} \implies \eqref{conditionB} \implies \eqref{conditionC}$. None of them is an equivalence, and we show below a large class of examples satisfying only condition \eqref{conditionC}.
      $$\begin{tikzpicture}
    \draw (1.5,2) node[draw,circle](q1){};
    \draw (0,0) node[draw,circle,fill=black](q2){};
    \draw (1,0) node[draw,circle,fill=black](q3){};
    \draw (2,0) node[draw,circle,fill=black](q4){};
    \draw (3,0) node[draw,circle,fill=black](q5){};
    
    \draw[-] (q1) edge node{} (q2);
    \draw[-] (q1) edge node{} (q3);
    \draw[-] (q1) edge node{} (q4);
    \draw[-] (q1) edge node{} (q5);

    \draw (6,0) node[draw,circle,fill=black](r1){};
    \draw (6,2) node[draw,circle](r2){};
    
    \draw[-] (r1) edge[right] node{$2$} (r2);
    
    \draw [-{Classical TikZ Rightarrow[length=5mm]}] (4,1) -- (5,1);
    \draw [-{Classical TikZ Rightarrow[length=5mm]}] (8,1) -- (7,1);

    \draw (9,1.5) node[draw,circle](1){};
    \draw (10,1.5) node[draw,circle](2){};
    \draw (9,0.5) node[draw,circle,fill=black](3){};
    \draw (10,0.5) node[draw,circle,fill=black](4){};
    
    \draw[-] (1) edge node{} (3);
    \draw[-] (1) edge node{} (4);
    \draw[-] (2) edge node{} (3);
    \draw[-] (2) edge node{} (4);
    
\end{tikzpicture}$$
\noindent
In general, it is easy to see that $K_{r,s}$ and $K_{r',s'}$ have a common symmetrized quotient if $rs=r's'$, and naturally if $r \neq r'$ it cannot be that they admit the same combinatorial quotient.

While Conditions \eqref{conditionA} and \eqref{conditionB} are clearly equivalence relations, Condition \eqref{conditionC} is not, as the following example shows:

$$\begin{tikzpicture}
    \draw (6,0) node[draw,circle,fill](qq1){};
    \draw[-] (qq1) edge[loop below, below] node{1} (qq1);

    \draw (3,1.73) node[draw,circle](q1){};
    \draw (2,0) node[draw,circle,fill](q2){};
    \draw (4,0) node[draw,circle,fill](q3){};
    \draw[-] (q1) edge node{} (q2);
    \draw[-] (q2) edge node{} (q3);
    \draw[-] (q3) edge node{} (q1);

    \draw (0,1.73) node[draw,circle](1){};
    \draw (0,0) node[draw,circle,fill](2){};
    \draw[-] (1) edge[right] node{$\sqrt{2}$} (2);
    \draw[-] (2) edge[loop below, below] node{1} (2);
\end{tikzpicture}$$
In which the first graph is related to the second, and the second to the third but the first and last one are not related.


\begin{lemma}[see for instance Theorem 6.2.4 in \cite{scheinerman2013fractional}]
	If $S$ and $T$ are doubly stochastic matrices and $v = Su$ and $u = Tv$, then $v = Pu$, for some permutation matrix $P$.
\end{lemma}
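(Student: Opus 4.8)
The plan is to prove this through majorization. Recall the Hardy--Littlewood--P\'olya observation: if $M$ is doubly stochastic and $w = Mz$, then $w$ is majorized by $z$, written $w \prec z$; that is, after ordering the entries of each vector in nonincreasing order, the partial sums of $w$ never exceed those of $z$, while the total sums coincide. This follows immediately from Birkhoff's theorem, since a doubly stochastic $M$ is a convex combination of permutation matrices, so $w = Mz$ is a convex combination of rearrangements of $z$, and any convex combination of rearrangements of a vector is majorized by that vector.

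With this in hand, apply the observation to each hypothesis: $v = Su$ with $S$ doubly stochastic gives $v \prec u$, and $u = Tv$ with $T$ doubly stochastic gives $u \prec v$. It then remains to invoke the antisymmetry of majorization. Writing $u^{\downarrow}$ and $v^{\downarrow}$ for the nonincreasing rearrangements, the two relations force $\sum_{i \le k} v^{\downarrow}_i \le \sum_{i \le k} u^{\downarrow}_i$ and $\sum_{i \le k} u^{\downarrow}_i \le \sum_{i \le k} v^{\downarrow}_i$ for every $k$, hence equality for every $k$, hence $u^{\downarrow} = v^{\downarrow}$ coordinatewise (take successive differences). Thus $u$ and $v$ have the same multiset of entries, which is exactly the assertion that $v = P u$ for some permutation matrix $P$.

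I expect no genuine obstacle here: the statement is a standard consequence of majorization theory, and the only point worth a remark is that the hypotheses implicitly make $S$ and $T$ square matrices of the same order, so that $u$ and $v$ lie in a common space $\R^n$ and the comparison $\prec$ is meaningful; granting this, the argument above is complete. If one prefers to avoid citing Birkhoff, the same conclusion follows by testing against convex functions: Jensen's inequality applied coordinatewise together with double stochasticity gives $\sum_i g(v_i) \le \sum_i g(u_i)$ for every convex $g$, and symmetrically with the roles of $u$ and $v$ reversed, so equality holds for all convex $g$, which once more pins down the sorted vectors and produces the permutation $P$.
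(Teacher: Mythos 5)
Your argument is correct and complete. The paper itself does not prove this lemma --- it is quoted from Scheinerman--Ullman (Theorem 6.2.4) --- so there is no in-paper proof to match; but the route you take is a clean and standard one. The two directions $v = Su \Rightarrow v \prec u$ and $u = Tv \Rightarrow u \prec v$ are exactly the Hardy--Littlewood--P\'olya/Birkhoff observation, and the antisymmetry of majorization up to permutation (equal partial sums of the sorted vectors for every $k$, then successive differences) does pin down $u^{\downarrow} = v^{\downarrow}$, hence $v = Pu$. The elementary textbook argument for this statement typically avoids the majorization machinery and instead compares only the \emph{largest} entries: since each entry of $v$ is a weighted average of entries of $u$ and vice versa, $\max u = \max v$, and one then shows the maximal entries of $u$ and $v$ occur with the same multiplicity and restricts to the complementary submatrices to induct downward through the distinct values. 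That version is self-contained and uses nothing beyond nonnegativity and row/column sums; yours is shorter once majorization (or Birkhoff) is available, and your closing remark about the convex-function test $\sum_i g(v_i) \le \sum_i g(u_i)$ gives yet another valid finish. The only hypothesis worth making explicit, as you note, is that $S$ and $T$ are square of the same order so that $u,v$ live in the same space; with that granted there is no gap.
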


\section{Pseudo-equitable partitions --- a linear algebra characterization}

The concept of a pseudo-equitable partition was likely introduced in \cite{FIOL1996179}, and has been recently studied in \cite{aida_pseudo_eq} (see references therein for other works on the topic). Essentially a partition is pseudo-equitable if there is a positive vertex weighting that makes it equitable. In most cases, it turns out that this vertex weighting is given by the eigenvector corresponding to the largest eigenvalue (the Perron-eigenvector, as it is sometimes called), but in this section we allow for any positive weighting.

Let $w\in \R^V_+$ be a vector with all positive entries, and let $D_w = \Diag(w)$ be the diagonal matrix whose diagonal entries correspond to $w$. We say that a partition $\pi$ of $V(G)$ is pseudo-equitable with respect to $w$ if the column space of $D_wP_\pi$ is $A_G$-invariant, or better yet, if there is a matrix $B$ such that
\[
	A_G (D_w P_\pi) = (D_w P_\pi) B.
\]
It is immediate to note that an equivalent way of seeing this is to consider that $\pi$ is equitable for the weighted graph $D_w\1 A_G D_w$. We say that the matrix $B$ is the adjacency matrix of the quotient graph $G/(w,\pi)$, that is, $B = A_{G / (w,\pi)}$.

We are particularly invested in scenarios where two graphs admit the same quotient graph, but perhaps one of them by means of a pseudo-equitable partition.

\begin{figure}[h]
\[	\begin{tikzpicture}
	\draw (1,1) node[draw,circle](q1){};
    \draw (2,1.9) node[draw,circle](q2){};
    \draw (2,1.3) node[draw,circle](q3){};
    \draw (2,0.7) node[draw,circle](q4){};
    \draw (2,0.1) node[draw,circle](q5){};
    \draw (3,2) node[draw,circle](a1){};
    \draw (3,1.6) node[draw,circle](a2){};
    \draw (3,1.2) node[draw,circle](a3){};
    \draw (3,0.8) node[draw,circle](a4){};
    \draw (3,0.4) node[draw,circle](a5){};
    \draw (3,0) node[draw,circle](a6){};
    \draw (4,1.9) node[draw,circle](q6){};
    \draw (4,1.3) node[draw,circle](q7){};
    \draw (4,0.7) node[draw,circle](q8){};
    \draw (4,0.1) node[draw,circle](q9){};
    \draw (5,1) node[draw,circle](q10){};
    \draw[-] (q1) edge node{} (q2);
    \draw[-] (q1) edge node{} (q3);
    \draw[-] (q1) edge node{} (q4);
    \draw[-] (q1) edge node{} (q5);
    \draw[-] (q2) edge node{} (a1);
    \draw[-] (q2) edge node{} (a2);
    \draw[-] (q2) edge node{} (a3);
    \draw[-] (q3) edge node{} (a1);
    \draw[-] (q3) edge node{} (a4);
    \draw[-] (q3) edge node{} (a5);
    \draw[-] (q4) edge node{} (a2);
    \draw[-] (q4) edge node{} (a4);
    \draw[-] (q4) edge node{} (a6);
    \draw[-] (q5) edge node{} (a3);
    \draw[-] (q5) edge node{} (a5);
    \draw[-] (q5) edge node{} (a6);
	\draw[-] (q6) edge node{} (a1);
    \draw[-] (q6) edge node{} (a2);
    \draw[-] (q6) edge node{} (a3);
    \draw[-] (q7) edge node{} (a1);
    \draw[-] (q7) edge node{} (a4);
    \draw[-] (q7) edge node{} (a5);
    \draw[-] (q8) edge node{} (a2);
    \draw[-] (q8) edge node{} (a4);
    \draw[-] (q8) edge node{} (a6);
    \draw[-] (q9) edge node{} (a3);
    \draw[-] (q9) edge node{} (a5);
    \draw[-] (q9) edge node{} (a6);
    \draw[-] (q10) edge node{} (q6);
    \draw[-] (q10) edge node{} (q7);
    \draw[-] (q10) edge node{} (q8);
    \draw[-] (q10) edge node{} (q9);
\end{tikzpicture} \hspace{1cm} 
\begin{tikzpicture}
	\draw (1,1) node[draw,circle](q1){};
    \draw (2,1.9) node[draw,circle](q2){};
    \draw (2,1.3) node[draw,circle](q3){};
    \draw (2,0.7) node[draw,circle](q4){};
    \draw (2,0.1) node[draw,circle](q5){};
    \draw (3,2) node[draw,circle](a1){};
    \draw (3,1) node[draw,circle,fill=black](a11){};
    \draw (3,0) node[draw,circle](a6){};
    \draw (4,1.9) node[draw,circle](q6){};
    \draw (4,1.3) node[draw,circle](q7){};
    \draw (4,0.7) node[draw,circle](q8){};
    \draw (4,0.1) node[draw,circle](q9){};
    \draw (5,1) node[draw,circle](q10){};
    \draw[-] (q1) edge node{} (q2);
    \draw[-] (q1) edge node{} (q3);
    \draw[-] (q1) edge node{} (q4);
    \draw[-] (q1) edge node{} (q5);
    \draw[-] (q2) edge node{} (a1);
    \draw[-] (q2) edge node{} (a11);
    \draw[-] (q3) edge node{} (a1);
    \draw[-] (q3) edge node{} (a11);
    \draw[-] (q4) edge node{} (a11);
    \draw[-] (q4) edge node{} (a6);
    \draw[-] (q5) edge node{} (a11);
    \draw[-] (q5) edge node{} (a6);
	\draw[-] (q6) edge node{} (a1);
    \draw[-] (q6) edge node{} (a11);
    \draw[-] (q7) edge node{} (a1);
    \draw[-] (q7) edge node{} (a11);
    \draw[-] (q8) edge node{} (a11);
    \draw[-] (q8) edge node{} (a6);
    \draw[-] (q9) edge node{} (a11);
    \draw[-] (q9) edge node{} (a6);
    \draw[-] (q10) edge node{} (q6);
    \draw[-] (q10) edge node{} (q7);
    \draw[-] (q10) edge node{} (q8);
    \draw[-] (q10) edge node{} (q9);
\end{tikzpicture}\]
\caption{Example of two graphs that admit the same quotient graph with respect to the classes with the same horizontal coordinate. However for the graph on the right hand side it is necessary to use a pseudo-equitable partition whose vector of weights assigns 1 to all white vertices, and 2 to the black vertex.}  \label{pstquot}
\end{figure}
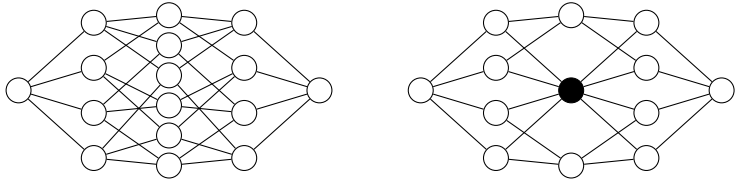

One application comes from quantum walks, as discussed in Section~\ref{sec:fraciso}. The graph on the left hand side in Figure~\ref{pstquot} admits perfect state transfer because it is a cartesian power of $P_2$, and therefore its symmetrized quotient graph also does. Because the graph on the right hand side admits the same quotient, it also admits perfect state transfer (see \cite[Section 5]{coutinho2016spectrally}).

In the next section we will characterize when two graphs admit the same symmetrized quotient with respect to pseudo-equitable partitions, but let us first show a theorem that associates to pseudo-equitable partitions very natural objects.

We need the following lemma, whose proof can be found in \cite{brosch2022jordan}.

\begin{lemma}\label{lemma from somewhere}
    An orthogonal projection matrix $P$ has nonnegative entries if and only if $\rng P$ has an orthonormal basis with only nonnegative vectors.
\end{lemma}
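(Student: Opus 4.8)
The statement to be proved is Lemma~\ref{lemma from somewhere}: \emph{an orthogonal projection matrix $P$ has nonnegative entries if and only if $\rng P$ has an orthonormal basis consisting of nonnegative vectors.}

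\medskip

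\textbf{Plan of proof.} The easy direction is ``if'': suppose $\rng P$ has an orthonormal basis $v_1,\dots,v_d$ with each $v_i \geq 0$ entrywise. Then $P = \sum_{i=1}^d v_i v_i^\T$, which is manifestly a nonnegative matrix, since it is a sum of nonnegative rank-one matrices. So the whole content is in the ``only if'' direction, and that is where the work lies.

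\medskip

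For the ``only if'' direction, assume $P$ is an orthogonal projection with $P \geq 0$ entrywise. The key idea is to exploit the Perron--Frobenius theory (Theorem~\ref{thm:PF}), exactly as was done in Section~\ref{sec:symquotient}. First I would reduce to the indecomposable (irreducible) case: since $P$ is symmetric and nonnegative, after a single permutation conjugation $P$ is block-diagonal with each diagonal block $P_r$ irreducible (or a $1\times 1$ zero block, which we discard as it contributes nothing to $\rng P$). An orthonormal nonnegative basis for $\rng P$ is obtained by concatenating such bases for each $\rng P_r$, and the supports of different blocks are disjoint, so orthonormality and nonnegativity are preserved. Thus it suffices to treat a single irreducible nonnegative projection $P_r$. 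Now an idempotent matrix has all eigenvalues in $\{0,1\}$; since $P_r \neq 0$, its spectral radius is $1$, and by Perron--Frobenius (Theorem~\ref{thm:PF}) the eigenvalue $1$ is simple with a strictly positive eigenvector $v$, and no other eigenvalue has a nonnegative eigenvector. Hence $\rng P_r$ is one-dimensional, spanned by $v$; normalizing, $v/\|v\|$ is a nonnegative (indeed positive) unit vector spanning $\rng P_r$. Concatenating over the blocks yields the desired orthonormal nonnegative basis of $\rng P$.

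\medskip

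\textbf{Main obstacle.} The only subtlety is the bookkeeping in the block reduction: one must check that a $1\times 1$ zero block of $P$ genuinely contributes nothing to $\rng P$ (it does, since that coordinate is identically zero on all of $\rng P$), and that the projection onto $\rng P$ restricted to the coordinates of block $r$ really is $P_r$ and is itself an orthogonal projection — this is immediate because $P^2 = P = P^\T$ passes to each diagonal block. Once this is in place, the Perron--Frobenius step forces each irreducible block of a nonnegative projection to have rank exactly one, which is the crux of the argument. (Implicitly this also recovers the known fact that a nonnegative orthogonal projection is, up to permutation, a direct sum of rank-one projections onto positive vectors with disjoint support.)
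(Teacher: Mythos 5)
Your proof is correct. Note, however, that the paper does not prove this lemma at all: it is stated with a pointer to an external reference (\cite{brosch2022jordan}), so there is no in-paper argument to compare against. Your ``if'' direction ($P=\sum_i v_iv_i^\T$ is visibly nonnegative) is the standard one, and your ``only if'' direction is a clean, self-contained argument that fits the paper's toolkit exactly: permute the symmetric nonnegative $P$ into irreducible diagonal blocks (discarding zero rows/columns, which indeed carry no part of $\rng P$), observe each nonzero block is itself a nonnegative idempotent symmetric matrix with spectral radius $1$, and invoke Theorem~\ref{thm:PF} to conclude the $1$-eigenspace of each block is one-dimensional and spanned by a positive vector; disjoint supports then give orthonormality of the assembled basis. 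This is essentially the same structural picture the paper later extracts \emph{from} the lemma in the proof of the theorem on $\hat\Pi(G)\to\mathcal{P}(G)$ (there the authors start from the nonnegative orthonormal basis and deduce the disjoint-support, rank-one block decomposition; you run the implication in the opposite direction to establish the lemma itself). The one point worth making explicit in a final write-up is the justification that a symmetric nonnegative matrix really does split, under a single simultaneous row/column permutation, into irreducible blocks indexed by the connected components of its support graph --- you assert this and it is true, but it deserves a sentence. With that, your argument stands on its own and could replace the external citation.
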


For the next theorem, it will be convenient to assume all pseudo-equitable partitions are presented with weight vectors that have been normalized in each class of the partition (this is without loss of any generality). We denote by $\hat\Pi(G)$ the set of all pairs of the form $(w,\pi)$ where $\pi$ is a pseudo-equitable partitions of $G$ with weight vector $w$, and $w$ is normalized in each class of $\pi$.

Let $\mathcal{P}(G)$ denote the set of all orthogonal projection matrices with nonnegative entries that satisfy the following two properties
\begin{enumerate}[(A)]
	\item If $M \in \mathcal{P}(G)$, then there is a positive vector $u$ in its range.
	\item $M$ commutes with $A_G$. 
\end{enumerate}
Note that if $M \in \mathcal{P}(G)$, we may decompose it into irreducible blocks. Each block has rank 1 and contains a unique positive eigenvector to the eigenvalue 1, as a consequence of the Perron-Frobenius theorem (Theorem \ref{thm:PF}). The vector which is the sum of the normalized Perron eigenvectors of each irreducible block is positive and belongs to the range of $M$. We call it the \textit{special positive eigenvector} of $M$.

Given $(w,\pi) \in \hat\Pi(G)$, we let $P_\pi$ denote its (unweighted) characteristic matrix, and $P_{(w,\pi)} = D_w P_\pi$ its weighted characteristic matrix. Note that its columns are already normalized. Then we denote by $S_{(w,\pi)}$ the projection onto the column space of $P_{(w,\pi)}$, that is
\begin{equation}
	S_{(w,\pi)} = P_{(w,\pi)}P_{(w,\pi)}\8 = D_wP_\pi P_\pi\8 D_w. \label{eq:wpi}
\end{equation}
With that, we can relate projectors and pseudo-equitable partitions by the following theorem.

\begin{theorem}
    Let $G$ be a graph. The following function is a bijection
    \begin{align*}
    	\varphi\colon\hat\Pi(G) &\mt \mathcal{P}(G),\\
    	(w,\pi) & \mapsto S_{(w,\pi)}.
    \end{align*}
     Then, if $\pi$ and $\sigma$ are pseudo-equitable partitions with respect to the same weight vector $w$, now dropped from the subscripts for clarity, it follows that:
    \begin{enumerate}[(i)]
        \item $\pi\leq\sigma\iff S_\pi\succeq S_\sigma\iff \rng S_\pi\supseteq  \rng S_\sigma,$ \label{condi}
        \item $S_{\pi\vee\sigma}$ is the orthogonal projection onto $\rng S_\pi \cap \rng S_\sigma$. \label{condii}
    \end{enumerate}
\end{theorem}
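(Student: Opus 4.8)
The plan is to prove the three claimed statements in order: first that $\varphi$ is a well-defined bijection, then the lattice-order correspondence in \eqref{condi}, then the meet-to-intersection statement in \eqref{condii}.

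\textbf{Well-definedness and injectivity of $\varphi$.} First I would check that $S_{(w,\pi)}$ really lies in $\mathcal{P}(G)$. It is an orthogonal projection by construction \eqref{eq:wpi}, it has nonnegative entries because $D_w$ and $P_\pi$ are nonnegative, its range contains the positive vector $D_w P_\pi \one$ (the positivity uses that $w$ is entrywise positive), and it commutes with $A_G$: from $A_G(D_w P_\pi) = (D_w P_\pi) B$ one gets $A_G S_{(w,\pi)} = (D_w P_\pi) B P_\pi\8 D_w$, which is symmetric since $D_w\1 A_G D_w$ is self-adjoint for the inner product weighted by $D_w^2$ — more cleanly, $\pi$ equitable for the symmetric matrix $D_w P_\pi$ normalized means $A_G$ commutes with the projection onto its column space, exactly the argument given right after \eqref{eq:sympartition}. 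For injectivity: if $S_{(w,\pi)} = S_{(w',\sigma)}$, the two projections have the same range; reading off the (normalized, nonnegative) columns, the partition into supports must coincide, so $\pi = \sigma$, and then the positive unit vector supported on each class is unique, so $w = w'$ after per-class normalization.

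\textbf{Surjectivity of $\varphi$.} Given $M \in \mathcal{P}(G)$, decompose $M$ into irreducible blocks as indicated in the excerpt; each block has rank one with a positive Perron eigenvector for eigenvalue $1$ (Theorem~\ref{thm:PF}), and by Lemma~\ref{lemma from somewhere} the range of $M$ has an orthonormal basis of nonnegative vectors — but the irreducible-block structure forces these basis vectors to be exactly the normalized Perron eigenvectors of the blocks, which have pairwise disjoint supports. Let $\pi$ be the partition of $V(G)$ into these supports, and let $w$ be the vector which on each class equals that class's Perron eigenvector (this is the special positive eigenvector of $M$, restricted per class, hence normalized per class). Then $M = D_w P_\pi P_\pi\8 D_w = S_{(w,\pi)}$, and since $M$ commutes with $A_G$, the column space of $D_w P_\pi$ is $A_G$-invariant, i.e.\ $(w,\pi)$ is pseudo-equitable: $\varphi(w,\pi) = M$.

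\textbf{The order and meet statements.} For \eqref{condi}, fix the common weight $w$. Since $\pi \le \sigma$ iff $\rng P_\pi \supseteq \rng P_\sigma$ iff $\rng(D_w P_\pi) \supseteq \rng(D_w P_\sigma)$ (as $D_w$ is invertible), and the range of $S_\pi$ is precisely $\rng(D_w P_\pi)$, the three conditions $\pi \le \sigma$, $\rng S_\pi \supseteq \rng S_\sigma$, and $S_\pi \succeq S_\sigma$ (the last being the standard fact that for orthogonal projections, Loewner order is equivalent to range containment) are all equivalent. For \eqref{condii}: $\pi \vee \sigma$ is the finest partition coarser than both, so $\rng(D_w P_{\pi \vee \sigma})$ is the largest subspace of the form $\rng(D_w P_\tau)$ contained in both $\rng(D_w P_\pi)$ and $\rng(D_w P_\sigma)$; I claim this is exactly $\rng S_\pi \cap \rng S_\sigma = \rng(D_w P_\pi) \cap \rng(D_w P_\sigma)$. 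One inclusion is immediate from \eqref{condi}. For the other, I need that the intersection is itself the range of a weighted partition matrix with weight $w$ — this is where I expect the main obstacle to lie: I would argue that $D_w\1(\rng S_\pi \cap \rng S_\sigma)$ is spanned by indicator vectors because it is the intersection of two indicator-spanned subspaces and the cells of $\pi \wedge \sigma$ refine everything, or more directly invoke that $\pi \vee \sigma$ is equitable (Proposition in Section~\ref{sec:fraciso}, applied to the weighted graph $D_w\1 A_G D_w$) so that $(w, \pi\vee\sigma) \in \hat\Pi(G)$ and its projection $S_{\pi\vee\sigma}$ lies in $\mathcal P(G)$ with range contained in the intersection and containing both $\rng S_\pi$'s "common refinement" — then a dimension or maximality count closes it. The delicate point is ensuring the intersection contains no vectors outside the span of the common coarsening's indicators; this follows because any vector in $\rng(D_w P_\pi)$ is $D_w$ times a $\pi$-block-constant vector, lying also in $\rng(D_w P_\sigma)$ forces it to be $\sigma$-block-constant too, hence constant on the join-blocks, giving $\rng S_\pi \cap \rng S_\sigma \subseteq \rng(D_w P_{\pi\vee\sigma})$, and the reverse inclusion is \eqref{condi}. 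Since $\varphi$ is a bijection onto $\mathcal{P}(G)$ and $S_{\pi\vee\sigma} \in \mathcal P(G)$, this identifies $S_{\pi\vee\sigma}$ as the orthogonal projection onto that intersection.
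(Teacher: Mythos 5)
Your proof is correct, and for the bijection and for part \eqref{condi} it follows essentially the same route as the paper: nonnegativity, the positive vector $w = D_wP_\pi\one$ in the range, commutation with $A_G$, injectivity via the uniqueness of the special positive eigenvector, and surjectivity via Lemma~\ref{lemma from somewhere} together with the disjoint supports of an orthonormal nonnegative basis and the rank-one irreducible blocks. Where you genuinely diverge is part \eqref{condii}. The paper establishes that the orthogonal projection onto $\rng S_\pi\cap\rng S_\sigma$ belongs to $\mathcal{P}(G)$ by exhibiting it as the limit $\lim_n (S_\pi S_\sigma S_\pi)^n$ of nonnegative operators commuting with $A_G$, and then identifies it with $S_{\pi\vee\sigma}$ by the maximality of the join under the order correspondence of \eqref{condi}. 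You instead compute the intersection directly: a vector lying in both ranges is $D_w$ times a vector constant on the cells of $\pi$ and on the cells of $\sigma$, hence constant on the cells of $\pi\vee\sigma$ (constancy propagates along chains of overlapping cells), so $\rng S_\pi\cap\rng S_\sigma=\rng(D_wP_{\pi\vee\sigma})=\rng S_{\pi\vee\sigma}$; pseudo-equitability of $\pi\vee\sigma$ with respect to $w$ comes from invariance of the intersection of $A_G$-invariant subspaces. Your argument is more elementary and self-contained --- it avoids the limit/contraction argument entirely and never needs to verify separately that the intersection projector is nonnegative or contains a positive vector --- at the cost of being tied to the two partitions sharing the weight vector $w$, which is exactly the hypothesis of \eqref{condii}, whereas the paper's limit argument would apply verbatim to ranges of arbitrary pairs of elements of $\mathcal{P}(G)$. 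Both are valid; yours is arguably the cleaner proof of the statement as written.
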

\begin{proof}
    Given $(w,\pi)$, the matrix $S_{(w,\pi)}$ is a nonnegative orthogonal projection by definition, and it commutes with $A_G$ due to an argument analogous to \cite[Lemma 1.1]{Compact_graphs_and_equitable_partitions} and shown in \cite[Lemma 3.1]{aida_pseudo_eq_2} for when $w$ is the Perron-eigenvector. Equation~\eqref{eq:wpi} gives an immediate way to verify that $w$ is an eigenvector of $S_{(w,\pi)}$, and in fact it is going to be the special positive eigenvector of $S_{(w,\pi)}$.
    
    Given two elements of $\hat\Pi(G)$, if they correspond to different partitions, then clearly their images under $\varphi$ are different. If the partitions are equal but the weight vectors are different, the unicity of the special positive eigenvector implies that their images will be different. Thus $\varphi$ is injective.

	Now, let $P\in \mathcal{P}(G)$ be arbitrary. Using Lemma \ref{lemma from somewhere}, we have an orthonormal basis $\zeta_r$ of nonnegative vectors for $\rng P$. As $\zeta_r$ are nonnegative, we must have for $r\neq s$ that $\supp\zeta_r\cap\supp\zeta_s=\emptyset$ to ensure orthogonality, and because there is at least one positive eigenvector of $P$, the supports of the $\zeta_r$ correspond to a decomposition of $P$ into irreducible blocks. The sum of the $\zeta_r$ is the special positive eigenvector of $P$, which we call $u$. Thus we conclude that $P = S_{(u,\pi)}$, where $\pi$ is the partition into $\{\supp\zeta_r\}_r$. The fact that $\pi$ is pseudo-equitable with weighting $u$ follows from the fact that $P$ and $A$ commute (again, see \cite[Lemma 1.1]{Compact_graphs_and_equitable_partitions} or \cite[Lemma 3.1]{aida_pseudo_eq_2}).
	
	Fix $w$ and assume $(w,\pi)$ and $(w,\sigma)$ are pseudo-equitable. We drop $w$ from the subscripts for clarity.
	
	Condition~\eqref{condi} follows from the fact that $\pi\leq\sigma$ if and only if $\col P_\pi \supseteq\col P_\sigma$, combined with the property $\rng S_\pi=\col P_\pi$, and that $S_\pi$ and $S_\sigma$ are projections.

	Based on \eqref{condi}, we can conclude that $\rng S_{\pi\vee\sigma}$ is the largest vector space which is contained in both $\rng S_\pi$ and $\rng S_\sigma$, and whose projector is also in $\mathcal{P}(G)$. The projector onto $\rng S_\pi\cap\rng S_\sigma$ is indeed in $\mathcal{P}(G)$ since it is the limit of nonnegative operators that commute with $A$: let $S=\lim_n (S_\pi S_\sigma S_\pi)^n$ --- it indeed exists and is a projector, as all entries of the Jordan matrix of $S_\pi S_\sigma S_\pi$ converge to zero, except the ones in the diagonal. It is self-adjoint as it is the limit of self-adjoint operators, and $\rng S=\rng S_\pi\cap \rng S_\sigma$ because as $S_\pi S_\sigma S_\pi$ is a contraction, the fix points of the convergent are the same as the fixed points of $S_\pi S_\sigma S_\pi$, which are $\rng S_\pi\cap \rng S_\sigma$.

\end{proof}


\section{Characterizing pseudo-equitable partitions} \label{sec:pseudo}

In this section, we present an analysis of when it is possible to use the technology from Theorem \ref{conditions for equal quotient} to understand pseudo-equitable partitions. 

Given graphs $G$ and $H$, suppose that for some nonzero nonnegative matrix $M$ we have $A_G M = M A_H$. Is there a weighting of these graphs that result in pseudo-equitable partitions whose quotients are the same? It turns out that this is the case provided there are diagonal matrices $D,E$ such that $DME^2M\8D$ and $EM\8D^2ME$ are doubly stochastic. Are these always available?

We will use a well-known theorem (see for instance \cite{sinkhorn_theorem}) in our result. Beforehand, a definition. We say that an $n\times n$ matrix $M$ has {\sl total support} if for each nonzero entry $M_{a,b}$ we have some permutation $\sigma\in S_n$ such that $\sigma a=b$ and for each $r$, $M_{r,\sigma r}\neq 0$.

\begin{theorem}[from \cite{sinkhorn_theorem}] \label{thm:sink}
    Given a nonnegative square matrix $M$, there are diagonal matrices $D, E$ with positive nonzero entries such that $DME$ is doubly stochastic if and only if $M$ has total support. Moreover, the matrix $DME$ is unique; and $D,E$ are also unique up to scalar multiplication if and only if $M$ is fully indecomposable.
\end{theorem}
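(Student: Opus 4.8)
The plan is to prove the four assertions of the theorem in turn: necessity of total support, scalability, uniqueness of $DME$, and the criterion for uniqueness of $D,E$.

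\emph{Necessity.} If $S=DME$ is doubly stochastic, then by Birkhoff--von Neumann $S=\sum_k\lambda_k P_k$ with $\lambda_k>0$ and $P_k$ permutation matrices, so every nonzero entry of $S$ lies in the support of some $P_k$, which exhibits total support for $S$; since $D$ and $E$ have nonzero diagonals, $M$ has the same zero pattern as $S$ and hence total support as well.

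\emph{Scalability.} I would first invoke the standard reduction: a matrix with total support and at least one nonzero entry has no zero line and, because any positive diagonal (a permutation $\sigma$ with every $M_{i,\sigma(i)}\ne 0$) must respect a block-triangular split, decomposes after row and column permutations into a direct sum of \emph{square} fully indecomposable blocks, each again of total support; scaling each block and reassembling reduces us to $M$ fully indecomposable. For such $M$, take $D$ with diagonal entries $e^{a_i}$ and $E$ with diagonal entries $e^{b_j}$, and introduce the convex function
\[
	F(a,b)=\sum_{i,j}M_{ij}\,e^{a_i+b_j}-\sum_i a_i-\sum_j b_j ,
\]
whose stationary points are exactly the $(a,b)$ making $DME$ doubly stochastic, and which is invariant under $(a,b)\mapsto(a+t\one,\,b-t\one)$. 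The step I expect to be the main obstacle is proving $F$ coercive modulo this one-parameter symmetry. For that I would establish the lemma: if $u_i+v_j\le 0$ for all nonzero $M_{ij}$ and $\one\8 u+\one\8 v\ge 0$, then summing the nonpositive numbers $u_i+v_{\sigma(i)}$ over a positive diagonal $\sigma$ forces each of them to vanish, so $u_i+v_j=0$ on the whole support of $M$, whence connectedness of the bipartite support graph (a consequence of full indecomposability) makes $(u,v)$ a translation. Granting this, along any minimizing sequence of unbounded norm the limiting direction is not a translation, so either some term $M_{ij}e^{a_i+b_j}$ blows up exponentially or the linear part $-\sum a_i-\sum b_j$ tends to $+\infty$; in either case $F\to+\infty$, contradicting minimality. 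Hence $F$ attains its infimum, and a minimizer yields the scaling. (Equivalently, one may prove that the alternating row/column normalization converges; it is exactly here that total support is needed.)

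\emph{Uniqueness of $DME$.} Suppose $\hat D S_1\hat E=S_2$ with $S_1,S_2$ doubly stochastic and $\hat D=\Diag(p)$, $\hat E=\Diag(q)$, $p,q>0$. Comparing row and column sums gives $S_1 q=p^{-1}$ and $S_1\8 p=q^{-1}$ entrywise. Fixing a row $i$, $p_i\sum_j(S_1)_{ij}q_j=1$ while $\sum_j(S_1)_{ij}=1$, so weighted AM--GM gives $1\ge p_i\prod_j q_j^{(S_1)_{ij}}$; taking logarithms, summing over $i$, and using that the columns of $S_1$ also sum to $1$ yields $\sum_i\log p_i+\sum_j\log q_j\le 0$. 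Applying the same computation to the inverse relation $\hat D^{-1}S_2\hat E^{-1}=S_1$ gives the reverse inequality, so equality holds throughout, forcing every use of AM--GM to be tight: $q_j$ is constant on the support of each row of $S_1$ and $p_i$ is constant on the support of each column. If $M$ is fully indecomposable, connectedness of the support graph makes $p_i\equiv p$ and $q_j\equiv q$, whereupon $S_1 q=p^{-1}$ forces $pq=1$, so $S_2=pS_1q=S_1$ and $\hat D,\hat E$ are reciprocal scalar matrices; the general total-support case follows by applying this blockwise, so $S_2=S_1$ in every case. Finally, when $M$ has two or more fully indecomposable blocks, rescaling them by independent positive constants preserves double stochasticity, so $D$ and $E$ are unique up to a single scalar precisely when $M$ is fully indecomposable.
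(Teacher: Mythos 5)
Your proposal is correct. Note, however, that the paper does not prove this statement at all: it is imported verbatim from the cited source (Sinkhorn--Knopp), so there is no internal proof to measure you against. What you have written is a self-contained proof by a genuinely different route from the one in that source. The original argument establishes existence by showing that the alternating row/column normalization converges, extracting limits of the diagonal factors via compactness --- essentially the same style of argument the paper itself uses later, in Section 5, for its rectangular two-sided scaling theorem. You instead take the variational route: necessity via Birkhoff--von Neumann, reduction to the fully indecomposable case via the standard direct-sum decomposition of total-support matrices, and existence by minimizing the convex functional $F(a,b)=\sum_{i,j}M_{ij}e^{a_i+b_j}-\sum_i a_i-\sum_j b_j$, with coercivity modulo the translation symmetry secured exactly by total support (your lemma on directions $(u,v)$ with $u_i+v_j\le 0$ on the support is the right statement, and its proof by summing over positive diagonals is correct). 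The uniqueness argument via weighted AM--GM and its equality case, together with connectedness of the bipartite support graph of a fully indecomposable matrix, is also sound, and the block-rescaling observation correctly settles the ``only if'' of the last clause. What the variational approach buys is a clean existence proof that avoids delicate convergence analysis of the iteration; what the iterative approach buys is an algorithm and a template that generalizes to the rectangular setting the paper actually needs. The only places where you lean on unproved standard facts --- Birkhoff--von Neumann, the decomposition of a total-support matrix into fully indecomposable square blocks, and the equality case of weighted AM--GM --- are all legitimately citable, so I see no gap.
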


The following result can be found in \cite{symetric_sinkhorn}.
\begin{corollary} \label{cor:sinkcor}
    Given a symmetric matrix $M$ with total support, we have a unique diagonal matrix $D$ with positive nonzero entries such that $DMD$ is doubly stochastic.
\end{corollary}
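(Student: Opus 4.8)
The plan is to reduce this symmetric statement to the (non‑symmetric) Sinkhorn theorem, Theorem~\ref{thm:sink}, by exploiting $M=M^\T$, and then to merge the two diagonal scalings that theorem produces into a single one by taking an entrywise geometric mean. First I would invoke Theorem~\ref{thm:sink}: since $M$ has total support, there are positive diagonal matrices $D_0,E_0$ with $N:=D_0 M E_0$ doubly stochastic, and the \emph{product} $N$ is unique. Transposing and using $M=M^\T$, the matrix $E_0 M D_0 = (D_0 M E_0)^\T = N^\T$ is also doubly stochastic (the transpose of a doubly stochastic matrix is doubly stochastic), so by uniqueness of the scaled matrix $E_0 M D_0 = D_0 M E_0 = N$; in particular $N$ is symmetric.

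The key step is to rewrite the identity $D_0 M E_0 = E_0 M D_0$. Cancelling $E_0$ on both sides and using that diagonal matrices commute, this is equivalent to saying that the positive diagonal matrix $R:=D_0 E_0^{-1}$ commutes with $M$, hence so does $R^{\pm 1/2}$ (a well-defined function of the positive diagonal matrix $R$). Now set $D:=(D_0 E_0)^{1/2}$, the entrywise positive square root. A direct check gives $D = D_0 R^{-1/2} = R^{1/2} E_0$, and therefore
$$D M D = (D_0 R^{-1/2})\, M\, (R^{1/2} E_0) = D_0\,(R^{-1/2} M R^{1/2})\, E_0 = D_0 M E_0 = N,$$
which is doubly stochastic. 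This proves existence, with $D$ having positive diagonal entries.

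For uniqueness, suppose $D'$ is another positive diagonal matrix with $D' M D'$ doubly stochastic. Then $D' M D'$ is itself a diagonal scaling of $M$ to a doubly stochastic matrix, so by the uniqueness clause of Theorem~\ref{thm:sink} it equals $N = DMD$. Writing $S = D^{-1} D'$, this says $S M S = M$, and applying the "$D,E$ unique up to scalar multiplication" part of Theorem~\ref{thm:sink} to the scaling pairs $(D,D)$ and $(D',D')$ forces $S$ to be a scalar matrix $cI$; since the entries are positive and $SMS=M$, we get $c=1$, i.e.\ $D'=D$.

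The step I expect to be the main obstacle is precisely the uniqueness of $D$ as a matrix (as opposed to the robust fact that the resulting doubly stochastic matrix $DMD$ is forced to equal $N$): here one must lean on the finer "uniqueness up to scalars" assertion of Sinkhorn's theorem, and hence on the full‑indecomposability structure of $M$, since the bare relation $SMS=M$ only pins $S$ down once that structural input is available. The existence half, by contrast, is essentially formal once the commutation $RM=MR$ is noticed; the geometric‑mean substitution $D=(D_0E_0)^{1/2}$ is the only real idea there, and Corollary~\ref{cor:sinkcor} then follows.
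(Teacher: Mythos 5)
The paper does not actually prove this corollary --- it is imported from \cite{symetric_sinkhorn} with no argument given --- so your proposal has to stand on its own. The existence half does: symmetry of $M$ together with the uniqueness of the scaled matrix in Theorem~\ref{thm:sink} gives $D_0ME_0=(D_0ME_0)^\T=E_0MD_0$, hence $R=D_0E_0^{-1}$ commutes with $M$; the entrywise identity $R_{ii}M_{ij}=M_{ij}R_{jj}$ shows $R_{ii}=R_{jj}$ on the support of $M$, so $R^{1/2}$ also commutes with $M$, and the geometric mean $D=(D_0E_0)^{1/2}=D_0R^{-1/2}=R^{1/2}E_0$ satisfies $DMD=D_0ME_0$, which is doubly stochastic. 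That part is correct and complete.

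The uniqueness half has a genuine gap, precisely at the spot you flagged but did not close: the ``unique up to scalar multiplication'' clause of Theorem~\ref{thm:sink} holds \emph{if and only if $M$ is fully indecomposable}, and the corollary assumes only total support, which is strictly weaker; you are not entitled to conclude that $S=D^{-1}D'$ is scalar. The relation $SMS=M$, i.e.\ $S_{ii}S_{jj}=1$ whenever $M_{ij}\neq 0$, pins $S$ down only when no connected component of the support graph of $M$ is bipartite, and this gap cannot be patched because the uniqueness of $D$ is in fact false under the stated hypothesis: for
\[
M=\begin{pmatrix}0&1\\1&0\end{pmatrix},
\]
which is symmetric with total support, every $D_t=\mathrm{diag}(t,t^{-1})$ with $t>0$ gives $D_tMD_t=M$ doubly stochastic. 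What survives in general is exactly what your first uniqueness step establishes, namely that the \emph{product} $DMD$ is unique; $D$ itself is unique only under an extra hypothesis such as full indecomposability, or a nonzero diagonal entry in each component of the support graph --- the latter does hold for the matrices $MM^\T$ (no null rows) to which the paper subsequently applies this corollary, so the misstatement is in the corollary as transcribed rather than in anything you could have supplied. You should record the uniqueness claim as being of the scaled matrix, not of the scaling.
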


We now start by describing which conditions on $M$ we need in order to apply the theorem. We will from now on suppose that $M$ is $k\times l$ and call $R_r$ the $r$th row-vector of $M$ and $C_r$ the $r$th column-vector.

\begin{lemma} \label{lem:17}
    Given a matrix $M$ we have that $MM\8$ has total support if and only if $M$ has no null rows. Moreover, in this case, $MDM\8$ also has total support for any positive diagonal matrix $D$.
\end{lemma}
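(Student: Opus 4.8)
The plan is to prove the two directions of the equivalence separately, and then handle the "moreover" clause. Recall that $MM^\T$ has total support means: every nonzero entry $(MM^\T)_{a,b}$ lies on a positive diagonal, i.e.\ there is a permutation $\tau$ with $\tau(a) = b$ and $(MM^\T)_{r,\tau(r)} \neq 0$ for all $r$. First I would observe the elementary fact that $(MM^\T)_{a,b} = \langle R_a, R_b\rangle = \sum_c M_{a,c}M_{b,c}$, which since $M$ is nonnegative is nonzero if and only if $R_a$ and $R_b$ share a common nonzero coordinate; in particular $(MM^\T)_{a,a} = \|R_a\|^2 \neq 0$ precisely when row $R_a$ is not identically zero.

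For the "only if" direction: if $M$ has a null row $R_a$, then the entire $a$-th row (and column) of $MM^\T$ is zero, so $(MM^\T)_{a,a} = 0$; but total support would require the diagonal entry $(MM^\T)_{a,a}$ to be nonzero (take the entry $(a,a)$, which must lie on a positive diagonal permutation $\tau$, forcing $(MM^\T)_{a,\tau(a)} \ne 0$ for the relevant $a$ — more simply, a matrix with total support can have no zero row), contradiction. So total support implies no null rows.

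For the "if" direction: suppose $M$ has no null rows. I want to show every nonzero entry of $MM^\T$ lies on a positive diagonal. The key point is that $MM^\T$ is a symmetric matrix with nonzero diagonal (since every row is nonzero, $(MM^\T)_{a,a} = \|R_a\|^2 > 0$). Given a nonzero off-diagonal entry $(MM^\T)_{a,b}$ with $a \neq b$, define the permutation $\tau$ to be the transposition $(a\;b)$ and the identity elsewhere: then $(MM^\T)_{a,\tau(a)} = (MM^\T)_{a,b} \neq 0$, $(MM^\T)_{b,\tau(b)} = (MM^\T)_{b,a} = (MM^\T)_{a,b} \neq 0$ by symmetry, and for every other index $r$, $(MM^\T)_{r,\tau(r)} = (MM^\T)_{r,r} \neq 0$. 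Hence that entry lies on a positive diagonal. A nonzero diagonal entry $(MM^\T)_{a,a}$ trivially lies on the positive diagonal given by the identity permutation. This establishes total support. The main (though mild) obstacle is just making sure the definition of total support is being applied correctly — that it suffices to exhibit, for each nonzero entry, \emph{one} supporting permutation, which the transposition trick handles cleanly because symmetry plus nonzero diagonal is exactly what is needed.

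Finally, for the "moreover" clause: if $D$ is any positive diagonal matrix, write $D = \Diag(d_1,\dots,d_l)$ with all $d_c > 0$. Then $(MDM^\T)_{a,b} = \sum_c d_c M_{a,c} M_{b,c}$. Since all $d_c > 0$ and $M$ is nonnegative, this sum is nonzero if and only if $\sum_c M_{a,c}M_{b,c} = (MM^\T)_{a,b}$ is nonzero; that is, $MDM^\T$ has exactly the same zero/nonzero pattern as $MM^\T$. Since total support is a property of the zero/nonzero pattern alone, and $MDM^\T$ is still symmetric, the argument above applies verbatim: $M$ has no null rows, so $MDM^\T$ has nonzero diagonal and, being symmetric, has total support. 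This completes the proof.
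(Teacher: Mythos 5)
Your proof is correct and takes essentially the same route as the paper: both reduce the entries of $MM^\T$ to non-orthogonality of rows, use the transposition $(a\,b)$ together with the nonzero diagonal $(MM^\T)_{r,r}=\|R_r\|^2>0$ to exhibit a supporting permutation for each nonzero entry, and observe that inserting a positive diagonal $D$ does not change the zero pattern. The only cosmetic quibble is your passing claim that total support ``would require'' nonzero diagonal entries, which is not literally part of the definition, but your parenthetical correction (a nonzero matrix with total support has no zero row) is the right justification and matches the paper's.
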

\begin{proof}
    We start by observing that 
    $$(MM\8)_{a,b}\neq 0\iff R_a\not\perp R_b.$$
    If $MM\8$ has total support, then it is immediate from the definition that no row of $MM\8$ is zero, thus $M$ has no null rows.
    
    For the converse, we have that if $(MM\8)_{a,b}\neq 0$ then for $\sigma=(ab)$ we have $R_a\not\perp R_b$ by hypothesis and $R_x\not\perp R_x$ since $R_x\neq 0$. 
  
    Putting $D$ in the product $MDM\8$ does not change the orthogonality relations, whence the result is still valid.
\end{proof}

\begin{theorem}
    Let $M$ be a $k\times l$ nonnegative matrix which possesses no null row or column, and such that for permutations matrices $P,Q$ we have
    $$M=P\veto{M_1&&&\\&M_2&&\\&&\ddots&\\&&&M_p}Q,$$
    where $M_r$ has positive entries.
    
    Then, there is a $k\times k$ positive diagonal $D$ and a $l\times l$ positive diagonal $E$ such that for $N:=DME$, both $NN\8$ and $N\8N$ are doubly stochastic.
\end{theorem}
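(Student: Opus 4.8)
The plan is to reduce the statement to a single entrywise-positive block and then produce the required two-sided scaling on that block using the square Sinkhorn theorem, Theorem~\ref{thm:sink}, applied to a square \emph{tiling} of the block. To dispose of the permutations and the block structure first: writing $M = P\,\Diag(M_1,\dots,M_p)\,Q$ with each $M_r$ of size $a_r\times b_r$ and entrywise positive, it suffices to find, for each $r$, positive diagonal $D_r$ ($a_r\times a_r$) and $E_r$ ($b_r\times b_r$) such that $N_r:=D_rM_rE_r$ has $N_rN_r\8$ and $N_r\8N_r$ doubly stochastic; one then sets $D=P\,\Diag(D_1,\dots,D_p)\,P\8$ and $E=Q\8\,\Diag(E_1,\dots,E_p)\,Q$, which are positive diagonal, so that $N=DME=P\,\Diag(N_1,\dots,N_p)\,Q$ and $NN\8$, $N\8N$ are block-diagonal with blocks $N_rN_r\8$, $N_r\8N_r$; a nonnegative block-diagonal matrix is doubly stochastic precisely when each of its blocks is. So from now on fix an $a\times b$ positive matrix $M_r$ (and drop $r$).

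Next I would pin down the target. It is enough to scale $M_r$ so that $N_r$ has all row sums equal to $\sqrt{b/a}$ and all column sums equal to $\sqrt{a/b}$: then $N_r\8\one=\sqrt{a/b}\,\one$ and $N_r\one=\sqrt{b/a}\,\one$, so $N_rN_r\8\one = N_r\big(\sqrt{a/b}\,\one\big)=\one$ and likewise $N_r\8N_r\one=\one$, and since $N_rN_r\8$ and $N_r\8N_r$ are symmetric and nonnegative they are doubly stochastic. (That these are the \emph{only} possible marginals is not needed, but explains the choice: by Perron--Frobenius, Theorem~\ref{thm:PF}, applied to the positive matrices $N_rN_r\8$ and $N_r\8N_r$, their Perron eigenvectors are unique, so double stochasticity forces $\one$ to be Perron for both, pinning the row sums and column sums of $N_r$ to constants with product $1$; the total then forces those two values to be $\sqrt{b/a}$ and $\sqrt{a/b}$.)

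Now comes the only real work, realizing such a scaling. Since $M_r$ is rectangular, Theorem~\ref{thm:sink} cannot be applied to it directly, so I would tile it into a square matrix: let $\widetilde M$ be the $ab\times ab$ block matrix that is a $b\times a$ array every one of whose blocks equals $M_r$, i.e.\ $\widetilde M=(\one_b\one_a\8)\otimes M_r$. It is entrywise positive, hence has total support, so Theorem~\ref{thm:sink} provides positive diagonal $\widetilde D,\widetilde E$ with $\widetilde D\widetilde M\widetilde E$ doubly stochastic, and \emph{this} doubly stochastic matrix is unique. Because $\widetilde M$ is unchanged by permuting its $b$ row-blocks among themselves, and independently by permuting its $a$ column-blocks, conjugating $\widetilde D$ (resp.\ $\widetilde E$) by such a permutation again yields a diagonal matrix and replaces $\widetilde D\widetilde M\widetilde E$ by a row (resp.\ column) permutation of itself, which is still doubly stochastic; by the uniqueness just quoted, $\widetilde D\widetilde M\widetilde E$ must be fixed by all these block permutations, hence is itself a $b\times a$ array of copies of a single $a\times b$ block. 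Comparing the $(I,J)$ block, which equals $\widehat D_I\,M_r\,\widehat E_J$ where $\widehat D_I$ and $\widehat E_J$ are the diagonal submatrices of $\widetilde D$, $\widetilde E$ cut out by row-block $I$ and column-block $J$, and using that $M_r$ has no zero row or column, this forces $\widehat D_I$ to be independent of $I$ and $\widehat E_J$ independent of $J$; call these $D_r$ and $E_r$, so that $N_r:=D_rM_rE_r$ is the common block. Reading off a row sum of the doubly stochastic grid gives that $a$ times any row sum of $N_r$ equals $1$, and similarly $b$ times any column sum of $N_r$ equals $1$; multiplying $D_r$ by $\sqrt{ab}$ then turns these marginals into $\sqrt{b/a}$ and $\sqrt{a/b}$, exactly what the previous paragraph requires. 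Reassembling over $r$ completes the proof.

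The crux is the rectangular scaling in the third paragraph. One cannot hand $M_r$ to Theorem~\ref{thm:sink}, and the naive alternative --- symmetrically scaling $M_rM_r\8$ and $M_r\8M_r$ separately via Corollary~\ref{cor:sinkcor} --- does not work, because inserting the second diagonal scaling into the middle of $M_rM_r\8$ destroys the normalization achieved by the first. The tiling device is what trades the rectangular problem for a square, permutation-symmetric one on which the uniqueness assertion of Theorem~\ref{thm:sink} can be leveraged; everything else (the block reduction and the one-line verification that constant marginals of the right size do the job) is routine.
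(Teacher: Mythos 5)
Your proof is correct, and it takes a genuinely different route from the paper's. The paper's proof is exactly the ``naive alternative'' you dismiss, made to work by iteration: it alternately applies the symmetric scaling of Corollary~\ref{cor:sinkcor} to $MM\8$ and to $M\8D_i^2M$, producing sequences $D_i,E_i$ of diagonal scalings, and the bulk of the argument is analytic --- showing that the diagonal entries $d_{i,x},e_{i,y}$ stay bounded away from $0$ and $\infty$ (via a pair of inequalities and a contradiction built on a partial order of decay rates among the $e_{i,y}$), so that Bolzano--Weierstrass yields a convergent subsequence whose limit is the desired $D,E$. You instead make a one-shot reduction: after the same restriction to a single positive block $M_r$, you tile it into the positive square matrix $(\one\one\8)\otimes M_r$, invoke Theorem~\ref{thm:sink} once, and use the uniqueness of the doubly stochastic image together with the block-permutation symmetry of the tiling to force the scaling to respect the block structure; since $M_r$ has no zero row or column this pins the diagonal blocks of $\widetilde D$ and $\widetilde E$ to be constant, and reading off the marginals ($1/a$ per row, $1/b$ per column) and rescaling by $\sqrt{ab}$ gives the constant row sums $\sqrt{b/a}$ and column sums $\sqrt{a/b}$ that make $N_rN_r\8$ and $N_r\8N_r$ doubly stochastic. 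I checked the two steps that carry the weight --- that conjugating $\widetilde D$ by a row-block permutation $\Pi$ fixing $\widetilde M$ turns $\widetilde D\widetilde M\widetilde E$ into $\Pi(\widetilde D\widetilde M\widetilde E)$, so uniqueness forces invariance, and that equality of all blocks $\widehat D_I M_r \widehat E_J$ cancels to $\widehat D_I=\widehat D_{I'}$ --- and both are sound. Your argument is shorter and avoids the convergence analysis entirely, which is where the paper's only real subtlety lies; what the paper's approach buys in exchange is that it is essentially algorithmic (the natural alternating-normalization iteration) and does not lean on the uniqueness clause of Sinkhorn's theorem, only on existence for symmetric matrices with total support.
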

\begin{proof}
We will use Corollary~\ref{cor:sinkcor} to recursively find matrices that satisfy partially what we wanted. Then, we will examine the sub-sequences of these matrices and conclude that there is one of them that converges, by Bolzano-Weierstrass. The limit will be the matrices we wanted.

By Lemma~\ref{lem:17} and Corollary~\ref{cor:sinkcor}, there is a $k\times k$ diagonal $D_1$, for which $D_1MM\8D_1$ is doubly stochastic. Applying this again, there is a $l\times l$ diagonal $E_1$ for which $E_1M\8D_1^2ME_1$ is doubly stochastic. An induction argument gives us $D_i,E_i$ for which
\begin{equation}\label{d.s. 1}
    D_{i+1}ME_i^2M\8D_{i+1}\one=\one;\\
\end{equation}
\begin{equation}\label{d.s. 2}
    E_iM\8D_i^2ME_i\one=\one.
\end{equation}

For each indecomposable block $M_r$ Equation \eqref{d.s. 1} and Equation \eqref{d.s. 2} still hold, with $M_r$ replacing $M$ and the appropriate diagonal block of $D$s and $E$s replacing the whole diagonal. So we can prove the result for each block independently and conclude that it is valid for the whole matrix $M$. For the sake of simplicity, we will maintain the notation and suppose that $M$ has only positive entries.

Denoting $d_{i,x}:=(D_i)_{x,x}$ and $e_{i,y}:=(E_i)_{y,y}$, Equation \ref{d.s. 2} is equivalent to

    $$e_{i,b}\sum_{x=1}^m\sum_{y=1}^n M_{x,b}d_{i,x}^2 M_{x,y}e_{i,y}=1;\qquad b=1,2,\dots, l.$$

We will now prove that each sequence $d_{i,x},e_{i,y}$ is bounded above and below by nonzero constants, which implies that $D_i,E_i$ converge for some sub-sequence to a positive diagonal.

Equation (\ref{d.s. 2}) gives us that
$$e_{i,b}=\dfrac{1}{\sum_{x,y}M_{x,b}d_{i,x}^2M_{x,y}e_{i,y}} \leq\dfrac{1}{d_{i,a}^2M^2_{a,b}e_{i,b}},$$
from where, calling $M:=\max_{x,y}\{M_{x,y}\}$ and $m:=\min_{x,y}\{M_{x,y}\}$, we have:
\begin{equation}\label{ineq. 1}
d_{i,x}e_{i,y}\leq\frac{1}{m},\qquad \forall x, y.
\end{equation}

Equation (\ref{d.s. 2}) and inequality (\ref{ineq. 2}) imply
\begin{equation}\label{ineq. 2}
e_{i,b}=\dfrac{1}{\sum_{x,y}M_{x,b}d_{i,x}^2M_{x,y}e_{i,y}} \geq\dfrac{ml}{M^2\sum_{x}d_{i,x}}.
\end{equation}

Taking sub-sequences we may suppose that all $d_{i,x},e_{i,y}$ either converge (possibly to zero) or go to infinity. Inequality (\ref{ineq. 1}) assures us that if some $d_{i,x}$ goes to infinity then all $e_{i,y}$ go to zero; and if some $e_{i,y}$ goes to infinity then all $d_{i,x}$ go to zero. 

Inequality (\ref{ineq. 2}) implies that if some $e_{i,y}$ goes to zero then some $d_{i,x}$ goes to infinity.

Thus, if no $d_{i,x}$ or $e_{i,y}$ goes to zero, then no sequence goes to infinity and we are done. Suppose otherwise. We have now two cases: for some $x$, $d_{i,x}\mt 0$ or there is $y$ for which $e_{i,y}\mt 0$. We will consider the latter. Equation (\ref{d.s. 2}), will imply, by the above argument, that $e_{i,y}\mt 0$ for all $y$.

Taking sub-sequences we can suppose that each of the following limits exists:
$$\frac{e_{i,y'}}{e_{i,y}}\mt L_{y',y}\in[0,\infty].$$

We will denote by $y\ll y'$ when $L_{y',y}=\infty$. This relation is transitive and anti-symmetrical, hence is a partial order on the values of $y$, and we have some maximal element, which we will call $c$.

Now, call $\tilde{E}_i:=e_{i,c}\1E_i$ and $\tilde{D}_i:=e_{i,c}D_i$. As all $e_{i,y}\mt 0$, by taking some sub-sequence, we can suppose that
$$\frac{e_{i+1,c}}{e_{i,c}}\mt 0.$$

By Equation (\ref{d.s. 1}) we conclude that 
$$
    \tilde{D}_{i+1}M\tilde{E}_i^2M\8\tilde{D}_{i+1}\one=\prt{\frac{e_{i+1,c}}{e_{i,c}}}^2\one;
$$
and
$$
    \tilde{E}_iM\8\tilde{D}_i^2M\tilde{E}_i\one=\one.
$$

As Equation (\ref{d.s. 2}) is still valid, and $\tilde{E}_{c,c}=1$, we will have that no entry of $\tilde{E}_i$ goes to zero. Also, since $y\not\ll c$ for all $y$, we have $(\tilde{E}_i)_{y,y}\not\mt\infty$. Whence, $\tilde{E}_i\mt \tilde{E}$, for some sub-sequence. As $\tilde{E}_iM\8 \tilde{D}_i$ converges (for some sub-sequence) due to Inequality (\ref{ineq. 1}) and the fact that $E_{c,c}=1$, we have $\tilde{D}_i\mt \tilde{D}$. This implies $\tilde{D}M\tilde{E}^2M\8\tilde{D}\one=0$, which contradicts $\tilde{E}M\8\tilde{D}M^2\tilde{E}\one=\one$.

The case $d_{i,x} \mt 0$ for some $x$ is analogous. We change the role of $d_{i,x}$ and $e_{i,y}$ in Inequality (\ref{ineq. 2}) and use Equation (\ref{d.s. 1}) instead of Equation (\ref{d.s. 2}), and the argument follows similarly.
\end{proof}

The theorem below follows immediately.

\begin{theorem}
    Suppose that $G$ and $H$ are positive-weighted graphs. The following are equivalent
    \begin{enumerate}[(i)]
        \item There is a nonzero nonnegative matrix $M$ such that the graph $Z_M$ with adjacency matrix
    $$\veto{&M\\M\8 &}$$
    has as connected components complete bipartite graphs, and $A_G M = M A_H$;
    \item there are pseudo-equitable partitions $(u,\pi)$ and $(w,\sigma)$ of $G$ and $H$ respectively such that $A_{\wt{G/(u,\pi_1)}} = A_{\wt{H/(w,\sigma)}}$.
    \end{enumerate}
    In the case $(i)\implies (ii)$, the classes and isomorphism are defined by the connected components of $Z_M$. \qed
\end{theorem}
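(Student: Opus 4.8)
I would prove the two implications separately. For $(ii)\Rightarrow(i)$, after renormalising the weight vectors to be unit on each cell (which changes neither pseudo-equitability nor the symmetrised quotient), I take $M:=P_{(u,\pi)}P_{(w,\sigma)}^\T$. Since $P_{(u,\pi)},P_{(w,\sigma)}$ are nonnegative with orthonormal columns and $A_GP_{(u,\pi)}=P_{(u,\pi)}B$, $A_HP_{(w,\sigma)}=P_{(w,\sigma)}B$ for the common symmetrised quotient $B$, the matrix $M$ is nonnegative, nonzero, and $A_GM=P_{(u,\pi)}BP_{(w,\sigma)}^\T=MA_H$; moreover $MM^\T=S_{(u,\pi)}$ and $M^\T M=S_{(w,\sigma)}$. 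If $\pi=\{C_1,\dots,C_p\}$ and $\sigma=\{F_1,\dots,F_p\}$ with the isomorphism matching $C_r\leftrightarrow F_r$, then $M_{ab}\ne 0$ exactly when $a\in C_r$ and $b\in F_r$ for a common $r$, so the support of $\begin{pmatrix}0&M\\M^\T&0\end{pmatrix}$ is block-diagonal with $r$-th block the biadjacency matrix of $K_{|C_r|,|F_r|}$; hence the components of $Z_M$ are complete bipartite, in bijection with the pairs of cells. (This direction uses nothing about double stochasticity.)

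\textbf{The reverse implication.} For $(i)\Rightarrow(ii)$, since every component of $Z_M$ is a $K_{a,b}$ with $a,b\ge 1$, after permuting vertices we may write $M=\operatorname{diag}(M_1,\dots,M_p)$ with each block $M_r$ having \emph{strictly positive} entries, supported on $C_r\times F_r$ (the two sides of the $r$-th component). Using $A_GM=MA_H$ and symmetry of $A_G,A_H$ I get $A_G(MM^\T)=MA_HM^\T=(MA_HM^\T)^\T=(MM^\T)A_G$, so $A_G$ commutes with $MM^\T=\operatorname{diag}(M_1M_1^\T,\dots,M_pM_p^\T)$; reading off blocks, with $X_{rt}:=(A_G)_{C_r,C_t}$ and $Y_{rt}:=(A_H)_{F_r,F_t}$, this gives $X_{rt}(M_tM_t^\T)=(M_rM_r^\T)X_{rt}$ for all $r,t$, and from the $(C_r,F_t)$-block of $A_GM=MA_H$ also $X_{rt}M_t=M_rY_{rt}$. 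Each $M_rM_r^\T$ and $M_r^\T M_r$ has positive entries, so by Perron--Frobenius (Theorem~\ref{thm:PF}) there is a simple largest eigenvalue $\rho_r>0$ with positive eigenvectors $v_r$ of $M_rM_r^\T$ and $z_r:=M_r^\T v_r$ of $M_r^\T M_r$ (same $\rho_r$, with $\|z_r\|^2=\rho_r\|v_r\|^2$). I then take $\pi=\{C_r\}$ with weight $v_r$ on $C_r$ and $\sigma=\{F_r\}$ with weight $z_r$ on $F_r$.

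\textbf{The key step.} The crux is a dichotomy: from $X_{rt}(M_tM_t^\T)=(M_rM_r^\T)X_{rt}$ one gets $(M_rM_r^\T)(X_{rt}v_t)=\rho_t(X_{rt}v_t)$ with $X_{rt}v_t\ge 0$; since $M_rM_r^\T$ is a positive matrix, Perron--Frobenius allows a nonzero nonnegative eigenvector only for the eigenvalue $\rho_r$, so $X_{rt}v_t=0$ when $\rho_t\ne\rho_r$ and $X_{rt}v_t$ is a nonnegative multiple of $v_r$ when $\rho_t=\rho_r$. In either case $A_G$ sends each column of $D_vP_\pi$ into $\col(D_vP_\pi)$, so $\pi$ is pseudo-equitable with respect to $v$; the same argument with $M^\T$, $A_H$, $M_r^\T M_r$ gives that $\sigma$ is pseudo-equitable with respect to $z$. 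For the symmetrised quotients, writing $\widehat v_r=v_r/\|v_r\|$, $\widehat z_r=z_r/\|z_r\|$, the entries are $\widehat v_r^\T X_{rt}\widehat v_t$ and $\widehat z_r^\T Y_{rt}\widehat z_t$, and using $z_r=M_r^\T v_r$, $\|z_r\|=\sqrt{\rho_r}\,\|v_r\|$ and $M_rY_{rt}=X_{rt}M_t$,
\[
\widehat z_r^\T Y_{rt}\widehat z_t=\frac{v_r^\T M_rY_{rt}M_t^\T v_t}{\sqrt{\rho_r\rho_t}\,\|v_r\|\|v_t\|}=\frac{v_r^\T X_{rt}(M_tM_t^\T)v_t}{\sqrt{\rho_r\rho_t}\,\|v_r\|\|v_t\|}=\sqrt{\tfrac{\rho_t}{\rho_r}}\;\widehat v_r^\T X_{rt}\widehat v_t.
\]
If this quantity is nonzero then $X_{rt}v_t\ne 0$, which by the dichotomy forces $\rho_r=\rho_t$ and hence the scalar $\sqrt{\rho_t/\rho_r}=1$; so $A_{\wt{G/(v,\pi)}}=A_{\wt{H/(z,\sigma)}}$ under the bijection $C_r\leftrightarrow F_r$, which also establishes the closing remark that the cells and the isomorphism are read off from the components of $Z_M$.

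\textbf{Main obstacle, and the role of the preceding theorem.} The only real work is the dichotomy above, which does double duty: it yields pseudo-equitability \emph{and} forces the Perron values of linked components to agree, which is exactly what makes the symmetrised quotients coincide. An alternative is to invoke the preceding theorem first, replacing $M$ by $N=DME$ with $NN^\T,N^\T N$ doubly stochastic: then every $N_rN_r^\T$ is doubly stochastic, so all spectral radii equal $1$, $\col(P_\pi)$ is cleanly the $1$-eigenspace of $NN^\T$, and the weight of $C_r$ emerges as the reciprocal of the $r$-th diagonal block of $D$. This tidies the eigenvalue bookkeeping but does not let one quote Theorem~\ref{conditions for equal quotient} verbatim, since only $DA_GD^{-1}N=NE^{-1}A_HE$ holds and the conjugates $DA_GD^{-1}$, $E^{-1}A_HE$ are not symmetric, so a Perron--Frobenius argument of the above kind is still needed. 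A minor nuisance in either route is isolated vertices of $Z_M$ (degenerate components $K_{1,0}$), avoided by taking "complete bipartite" to mean $K_{a,b}$ with $a,b\ge 1$ (equivalently, $M$ with no zero row or column), or by appending them as singleton cells with Perron weight $1$.
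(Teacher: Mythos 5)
Your proposal is correct, but it follows a genuinely different route from the paper's. The paper obtains this statement as an ``immediate'' consequence of the preceding Sinkhorn-type theorem: the complete-bipartite condition on $Z_M$ is precisely the hypothesis of that theorem (no null row or column, positive indecomposable blocks), so one replaces $M$ by $N=DME$ with $NN^\T$ and $N^\T N$ doubly stochastic and then reasons as in Theorem~\ref{conditions for equal quotient}. You bypass the Sinkhorn normalisation entirely and work with the unnormalised $M$: from $A_G(MM^\T)=(MM^\T)A_G$ you extract the Perron eigenvectors $v_r$ of the positive blocks $M_rM_r^\T$, take $z_r=M_r^\T v_r$ on the other side, and run the eigenvalue dichotomy ($X_{rt}v_t=0$ or $\rho_r=\rho_t$ with $X_{rt}v_t$ a nonnegative multiple of $v_r$) to get both pseudo-equitability and the equality of symmetrised quotients in one stroke. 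This is self-contained, avoids the delicate limiting argument of the preceding theorem, and makes the weight vectors completely explicit; moreover, your observation that the conjugated intertwining relation $(DA_GD^{-1})N=N(E^{-1}A_HE)$ involves non-symmetric matrices, so that Theorem~\ref{conditions for equal quotient} cannot be quoted verbatim after normalising, is a fair point --- the paper's derivation is terser than ``immediate'' warrants, and your argument supplies exactly the missing Perron--Frobenius step. One caveat: your parenthetical fallback of appending isolated vertices of $Z_M$ as singleton cells with weight $1$ does not work in general, since a singleton cell $\{a\}$ forces $A_Ge_a$ to restrict to a multiple of the weight vector on every cell; keep your primary reading that the components are $K_{a,b}$ with $a,b\ge 1$, equivalently that $M$ has no zero row or column, which is also what the hypotheses of the paper's preceding theorem demand.
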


\section{Open problems}

There exist polynomial time algorithms to decide whether two graphs are fractionally isomorphic and whether two graphs admit isomorphic quotients with respect to their coarsest equitable partitions. An obvious question is whether there is an efficient algorithm to decide with two graphs admit isomorphic symmetrized quotients.

Our original motivation to look at symmetrized quotients is the connection with quantum walks shown in Section~\ref{sec:quantum}. The main problem we would like to solve is that of finding the smallest graph whose symmetrized quotient matrix is a given one. We warn this is a hard problem, as evidenced by the efforts in \cite{kay2018perfect}.

Fractional isomorphism can be characterized by means of tree homomorphism count. Is there an analogous theory for isomorphic quotients (combinatorial or symmetrized)?

\subsection*{Acknowledgements}

Authors thank Thomás J. Spier, Chris Godsil and Aida Abiad for conversations about the topic of this paper.

Frederico Cançado acknowledges support from CAPES. Gabriel Coutinho acknowledges support from CNPq and FAPEMIG.

\bibliographystyle{plain}
\bibliography{bib}

\end{document}